\numberwithin{equation}{section} 
\numberwithin{figure}{section} 
  \theoremstyle{plain}
  \newtheorem{thm}{Theorem}[section]
  \theoremstyle{definition}
  \newtheorem{defn}[thm]{Definition}
  \theoremstyle{plain}
  \newtheorem{lem}[thm]{Lemma}
  \theoremstyle{remark}
  \newtheorem{rem}[thm]{Remark}
\subjclass[2000]{Primary: 47A57; Secondary: 32C15, 46E20, 46E22, 47B32}
\begin{document}

\title{Test Functions in Constrained Interpolation}

\author{James Pickering}

\begin{abstract}
We give a set of test functions for the interpolation problem on $H_{1}^{\infty}$,
the constrained interpolation problem studied by Davidson, Paulsen,
Raghupathi and Singh. We show that this set of test functions is minimal.
\end{abstract}

\thanks{This paper is based on work contributing to the author's PhD thesis,
at the University of Newcastle-upon-Tyne, under the supervision of
Michael Dritschel. This work is funded by the Engineering and Physical
Sciences Research Council.}

\address{James Pickering\\
Department of Mathematics\\
University of Newcastle-upon-Tyne\\
Newcastle-upon-Tyne\\
NE8 3JT\\
United Kingdom}

\email{james.pickering@ncl.ac.uk}

\urladdr{http://www.jamespic.me.uk}

\maketitle

\section{Background and Introduction}

There has been increased interest in constrained interpolation problems
recently. These problems have many of the unusual characteristics
of harder interpolation problems (they generally require collections
of kernels, and have interesting behavior in the case of matrix valued
interpolation, much like interpolation on multiply-connected domains),
but are simple enough that we can do calculations explicitly (the
kernels are typically rational functions), and we can reuse much of
the theory of interpolation on the unit disc.

The basic problem, as discussed in \cite{H1Infinity}, is the following:
Under what circumstances can we find a bounded, holomorphic function
$f$ on the disc, with zero derivative at $0$ (that is, \[
f\in H_{1}^{\infty}:=\left\{ g\in H^{\infty}:\, g^{\prime}(0)=0\right\} \]
 for our function $f$), which takes prescribed values $w_{1}$, $\ldots$,
$w_{n}$ at prescribed points $z_{1}$, $\ldots$, $z_{n}$. The solution,
as given in \cite{H1Infinity}, is analogous to the Nevanlinna-Pick
theorem; such a function exists if and only if \[
\left((1-w_{i}\overline{w_{j}})k^{s}(z_{j},\, z_{i})\right)_{i,j=1}^{n}\geq0\quad\forall s\in S^{2}\,,\]
 where $S^{2}$ denotes the real 2-sphere, and the kernels $k^{s}$
are a particular class of kernels, parameterised by points $s$ on
the sphere%
\footnote{They actually give two, different, Nevanlinna-Pick type theorems,
but the second is not relevant here.%
}.

In this paper, we look at interpolation with test functions in $H_{1}^{\infty}$.
The test function approach to interpolation originates with Agler
(see \cite{AglerPick} for a treatment of Agler's approach to this
subject, as well as for general background on the field), although
we'll be taking results and notation from Dritschel and McCullough's
paper (\cite{DritschelInterpolation}) on the subject.

We give a set of test functions for $H_{1}^{\infty}$, broadly following
the approach of \cite{AglerComputation}, via a Herglotz representation
for $H_{1}^{\infty}$. These test functions turn out to be rational
functions, and the set of test functions is parameterised by the sphere.
We show, using techniques similar to those in \cite{DritschelInterpolation},
that our set of test functions is minimal. We give some indication
of how these techniques could yield test functions for other types
of constrained interpolation problem, although the theory appears
to be less elegant in these situations.

We'll also introduce the idea of \emph{differentiating kernels}. These
are a simple analogue of reproducing kernels, and whilst they're not
particularly interesting in and of themselves, they have proved to
be a useful tool when working with problems of this sort.

\section{\label{sec:Differentiating-Kernels}Differentiating Kernels}

In this paper, it's convenient to introduce \emph{differentiating
kernels}. These are along much the same lines as reproducing kernels:
We know by Cauchy's integral formula that differentiation is a bounded
linear functional on $H^{2}$, so for each $x\in\mathbb{D}$, and
for each $i=0,\,1,\,2,\,\ldots$, there exists some function $k\left(x^{(i)},\,\cdot\right)\in H^{2}$
such that \[
f^{(i)}(x)=\left\langle f(\cdot),\, k\left(x^{(i)},\,\cdot\right)\right\rangle \,.\]

The above argument also holds for $H^{2}(R)$, for any finitely connected
planar domain $R$. In the case of $H^{2}[=H^{2}(\mathbb{D})]$, we
can use Cauchy's integral formula to calculate $k(x^{(i)},\, y)$
explicitly. We note that complex contour integration is with respect
to $dz=2\pi i.z\, ds$, where $s$ is normalised arc length measure
(the measure on $H^{2}$). So, if we let $f\in H^{\infty}$, then
\begin{align*}
f^{(n)}(x) & =\frac{n!}{2\pi i}\oint_{\mathbb{T}}\frac{f(z)}{(z-x)^{n+1}}dz\\
 & =\frac{n!}{\cancel{2\pi i}}\int_{\mathbb{T}}\frac{\cancel{2\pi i}\, z\, f(z)}{(z-x)^{n+1}}ds\\
 & =\int_{\mathbb{T}}f(z)\overline{\left(n!\frac{\overline{z}}{(\overline{z}-\overline{x})^{n+1}}\right)}ds\\
 & =\int_{\mathbb{T}}f(z)\overline{\left(n!\frac{z^{-1}}{(z^{-1}-\overline{x})^{n+1}}\right)}ds\\
 & =\int_{\mathbb{T}}f(z)\overline{\left(n!\frac{z^{n}}{(1-\overline{x}z)^{n+1}}\right)}ds\\
 & =\left\langle f(z),\, n!\frac{z^{n}}{(1-\overline{x}z)^{n+1}}\right\rangle _{z}\,.\end{align*}
 Since $H^{\infty}$ is dense in $H^{2}$, this also holds for $f\in H^{2}$.
We can now see that \[
k(x^{(n)},\, y)=n!\frac{y^{n}}{(1-\overline{x}y)^{n+1}}=\frac{\partial^{n}}{\partial\overline{x}^{n}}k(x,\, y)\]
 where $k(x,\, y)=(1-\overline{x}y)^{-1}$ is the ordinary Szegő kernel.
For brevity, we write $k_{x^{(i)}}$ for the function $k(x^{(i)},\,\cdot)$.

If $M_{f}$ is the multiplication operator of $f$ on $H^{\infty}$,
these differentiating kernels satisfy \[
M_{f}^{*}\frac{k_{x^{(n)}}}{n!}=\sum_{i=1}^{n}\frac{f^{(i)}(x)^{*}}{i!}\frac{k^{(n-i)}}{(n-i)!}\,,\]
 as \begin{align*}
\left\langle g,\, M_{f}^{*}\,\frac{k_{x^{(n)}}}{n!}\right\rangle  & =\left\langle M_{f}g,\,\frac{k_{x^{(n)}}}{n!}\right\rangle \\
 & =\left\langle fg,\,\frac{k_{x^{(n)}}}{n!}\right\rangle \\
 & =\frac{1}{n!}(fg)^{(n)}(x)\\
 & =\frac{1}{n!}\sum_{i=1}^{n}\left(\begin{smallmatrix}n\\
i\end{smallmatrix}\right)f^{(i)}(x)g^{(n-i)}(x)\\
 & =\sum_{i=1}^{n}\frac{f^{(i)}(x)}{i!}\frac{g^{(n-i)}(x)}{(n-i)!}\\
 & =\left\langle g,\,\sum_{i=1}^{n}\frac{\overline{f^{(i)}(x)}}{i!}\frac{k_{x^{(n-i)}}}{(n-i)!}\right\rangle \,.\end{align*}

\section{Test Functions}

\subsection{Definitions}

We refer the reader to \cite{DritschelInterpolation} for more in
depth discussion of test functions%
\footnote{Many of these ideas are also covered in \cite{AglerPick}, although
our notation is largely taken from \cite{DritschelInterpolation}.%
}. For our purposes, we will need some basic definitions. Note that
despite the similarity in name, these test functions are unrelated
to their namesakes in distribution theory

\begin{defn}
A set $\Psi$ of complex valued functions on a set $X$ is a set of
test functions if:
\begin{enumerate}
\item For each $x\in X$, \[
\sup\left\{ \left|\psi(x)\right|:\,\psi\in\Psi\right\} <1\,.\]

\item If $F$ is a finite set with $n$ elements, then the unital algebra
generated by $\Psi\vert_{F}$ (the restriction of $\Psi$ to $F$)
is $n$-dimensional (that is, $\Psi$ separates points).
\end{enumerate}
\end{defn}

\begin{defn}
For any collection of test functions $\Psi$, we define a set of positive
kernels \[
\mathcal{K}_{\Psi}:=\left\{ k:x\times x\to\mathbb{C}\vert\,(1-\psi(x)\overline{\psi(y)})k(x,\, y)\geq0\,\forall\psi\in\Psi\right\} \,.\]
 From this, we define a normed space $H^{\infty}(\mathcal{K}_{\Psi})$.
We say a function $f:X\to\mathbb{C}$ is in $H^{\infty}(\mathcal{K}_{\Psi})$
with $\left\Vert f\right\Vert _{H(\mathcal{K}_{\Psi})}\leq1$ if \[
(1-f(x)\overline{f(y)})k(x,\, y)\geq0\,\forall k\in\mathcal{K}_{\Psi}\,.\]
 Since we have defined the unit ball of $H^{\infty}(\mathcal{K}_{\Psi})$,
by extension we have defined the whole of $H^{\infty}(\mathcal{K}_{\Psi})$,
and given its norm.
\end{defn}

\subsection{Zero Norm Probability Measures}

We start out by finding a set of test functions for $H_{1}^{\infty}$.
Since test functions have norm 1 or less, the möbius transform\[
m:\, z\to\frac{1+z}{1-z}\]
(which takes the unit disc to the right half plane), takes test functions
to functions with positive real part. Since $H_{1}^{\infty}\subseteq H^{\infty}$,
our test functions must have a Herglotz representation (see Theorem
1.1.19 of \cite{AglerComputation}), so if $\psi$ is a test function,
$f(z)=\frac{1+\psi(z)}{1-\psi(z)}$, and $f(0)>0$,%
\footnote{This condition isn't as problematic as it looks%
} then \[
f(z)=\int_{\mathbb{T}}\frac{w+z}{w-z}d\mu(w)\]
 for some positive measure $\mu$. For our test function to be in
$H_{1}^{\infty}$, we also need that $\psi^{\prime}(0)=0$. We can
see that since \[
f^{\prime}(z)=\psi^{\prime}(z)\, m^{\prime}\left(\psi(z)\right)\]
 and $m^{\prime}\neq0$, we have that $\psi^{\prime}(0)=0$ if and
only if $f^{\prime}(0)=0$.

Now, \[
f^{\prime}(z)=\int_{\mathbb{T}}\left(\frac{d}{dz}\frac{w+z}{w-z}\right)d\mu(w)=\int_{\mathbb{T}}\frac{2w}{(w-z)^{2}}d\mu(w)\]
 so \[
f^{\prime}(0)=\int_{\mathbb{T}}\frac{2}{w}d\mu(w)=2\int_{\mathbb{T}}\overline{w}d\mu(w)=2\overline{\int_{\mathbb{T}}wd\mu(w)}\]
 so if $\mu$ is a probability measure (which it will later be convenient
to assume it is), then the condition that $\psi^{\prime}(0)=0$ is
equivalent to the condition that $\mathbb{E}(\mu)=0$, so $\mu$ has
\emph{zero-mean}. It should be noted here that requiring $\mu$ to
be a probability measure is equivalent to requiring that $f(0)=1$,
or equivalently still, that $\psi(0)=0$.

We have proved the following:

\begin{thm}
\label{thm:ZeroMean}The analytic function $\psi$ has $\left\Vert \psi\right\Vert _{\infty}\leq1$,
$\psi(0)=0$, and $\psi^{\prime}(0)=0$ if and only if the corresponding
measure is a zero-mean probability measure.
\end{thm}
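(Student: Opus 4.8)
The plan is to establish the biconditional by tracking what each of the three conditions on $\psi$ corresponds to under the Herglotz representation, since essentially all the ingredients have already been assembled in the discussion preceding the statement. The Möbius transform $m(z)=\frac{1+z}{1-z}$ carries the closed unit disc (minus the point $1$) to the closed right half-plane, so $\left\Vert\psi\right\Vert_\infty\leq1$ corresponds precisely to $f=m\circ\psi$ having nonnegative real part, which by the Herglotz theorem (Theorem 1.1.19 of \cite{AglerComputation}) is equivalent to $f$ admitting a representation $f(z)=\int_{\mathbb{T}}\frac{w+z}{w-z}\,d\mu(w)$ for some positive measure $\mu$. I would open the proof by recording this correspondence and noting that it is a genuine bijection between such $\psi$ and such $\mu$ once we pin down the two normalizing conditions.

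Next I would handle the two pointwise conditions at the origin. Evaluating the integral representation at $z=0$ gives $f(0)=\int_{\mathbb{T}}d\mu(w)=\mu(\mathbb{T})$, so $\mu$ being a \emph{probability} measure is exactly the statement $f(0)=1$; and since $m(0)=1$ with $m$ injective, $f(0)=1$ is in turn equivalent to $\psi(0)=0$. This is the content of the remark already made in the text, and I would simply cite that computation. For the derivative condition I would use the formula derived just above the statement, namely $f'(z)=\int_{\mathbb{T}}\frac{2w}{(w-z)^{2}}\,d\mu(w)$, whence $f'(0)=2\int_{\mathbb{T}}\overline{w}\,d\mu(w)=2\,\overline{\mathbb{E}(\mu)}$. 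Because $m'$ never vanishes and $f'(0)=\psi'(0)\,m'(\psi(0))$, we get $\psi'(0)=0$ if and only if $f'(0)=0$, which under the probability-measure normalization is equivalent to $\mathbb{E}(\mu)=\int_{\mathbb{T}}w\,d\mu(w)=0$, i.e.\ $\mu$ is zero-mean.

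I would then assemble these three equivalences into the stated biconditional, being careful to point out that the correspondence $\psi\leftrightarrow\mu$ is well-defined in both directions: given any zero-mean probability measure $\mu$, the function $f(z)=\int_{\mathbb{T}}\frac{w+z}{w-z}\,d\mu(w)$ has positive real part with $f(0)=1$ and $f'(0)=0$, so $\psi=m^{-1}\circ f$ is a well-defined analytic self-map of the disc with $\psi(0)=0$ and $\psi'(0)=0$, and conversely. Since almost every step is a direct quotation of a calculation already carried out in the preceding paragraphs, the proof is essentially a matter of organizing those observations.

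The one genuine subtlety — and the step I expect to require the most care — is the boundary behavior of the Herglotz map: $m$ sends the point $1\in\mathbb{T}$ to $\infty$, and $f$ is guaranteed finite only inside the open disc, so I would want to confirm that the representing measure is uniquely determined and that no test function is lost at the boundary. In practice the footnote flagging that the condition $f(0)>0$ ``isn't as problematic as it looks'' is signaling exactly this point; I would address it by noting that normalizing to $\psi(0)=0$ fixes the additive imaginary constant in the Herglotz representation (which is forced to be $0$ since $f(0)=1$ is real), so the measure $\mu$ is uniquely pinned down and the correspondence is a clean bijection rather than merely a surjection.
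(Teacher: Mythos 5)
Your proposal is correct and takes essentially the same route as the paper: the theorem there is stated as a summary of the immediately preceding discussion (the Herglotz representation of $f=m\circ\psi$, the computation $f^{\prime}(0)=2\,\overline{\mathbb{E}(\mu)}$ with $m^{\prime}$ nonvanishing, and the equivalences $\psi(0)=0\iff f(0)=1\iff\mu(\mathbb{T})=1$), which is exactly the chain of equivalences you assemble. Your added care about the uniqueness of the representing measure and the vanishing of the additive imaginary constant is a sensible tightening of the same argument, not a different one.
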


\subsection{Extreme Directions\label{sub:Extreme-Directions}}

We'll be using a lot of techniques and definitions from \cite{AglerComputation}.
In that paper, they used the convention that if $X$ was a {}``real
function space'' in some sense, then $X^{h}$ is the set of all real
functions in $X$ corresponding to holomorphic functions. Here, we'll
use the convention that $X^{1}$ is the set of all real functions
in $X$ corresponding to holomorphic functions with zero derivative
at 0, in ways that should be fairly clear.

If $L_{\mathbb{R}}^{2}(\mathbb{T})$ is defined in the usual way,
then $L_{\mathbb{R}}^{2,1}(\mathbb{T})$ is the set of all functions
in $L_{\mathbb{R}}^{2}(\mathbb{T})$ which are the real part of an
analytic function with $f^{\prime}(0)=0$. It is easy to see that
$\left(L_{\mathbb{R}}^{2,1}(\mathbb{T})\right)^{\perp}=\text{span}\left\{ \text{Im}z,\,\text{Re}z\right\} $,
so $L_{\mathbb{R}}^{2,1}(\mathbb{T})$ has co-dimension 2 in $L_{\mathbb{R}}^{2}(\mathbb{T})$.
If $M_{\mathbb{R}}(\mathbb{T})$ is the space of finite regular real
Borel measures%
\footnote{Remember that we can associate a harmonic function to a measure $\mu\in M_{\mathbb{R}}(\mathbb{T})$
via the Poisson kernel.%
} on $\mathbb{T}$, and $C_{\mathbb{R}}(\mathbb{T})$ is the space
of real continuous functions on $\mathbb{T}$ ($M_{\mathbb{R}}(\mathbb{T})$
is the dual of $C_{\mathbb{R}}(\mathbb{T})$, under the weak-{*} and
uniform topologies, respectively), then as in \cite{AglerComputation}:\[
M_{\mathbb{R}}^{1}(\mathbb{T})=\left\{ \text{Im}z,\,\text{Re}z\right\} ^{\perp}\]
 and \[
C_{\mathbb{R}}^{1}(\mathbb{T})^{\perp}=\text{span}\left\{ \text{Im}z\, ds,\,\text{Re}z\, ds\right\} \,.\]

We also need to make use of extreme directions. We say a vector $x$
in a cone $C$ is an extreme direction in $C$ if, to have $x=x_{1}+x_{2}$
for some $x_{1},\, x_{2}\in C$ we need $x_{1}=tx$ and $x_{2}=sx$
for some $s,\, t\geq0$.

This allows us to formulate the following:

\begin{thm}
\label{thm:ExtremeDirections}Let $E=\left\{ \mu\in M_{\mathbb{R}}^{1}(\mathbb{T}):\,\mu\geq0\right\} $.
If $\mu$ is an extreme direction in $E$, then $\mu$ is supported
at three or fewer points on $\mathbb{T}$.
\end{thm}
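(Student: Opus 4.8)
The plan is to argue by contradiction: I would suppose that $\mu \in E$ is an extreme direction whose support contains at least four distinct points, and then manufacture a nontrivial splitting $\mu = \mu_1 + \mu_2$ in $E$ with $\mu_1$ not a scalar multiple of $\mu$, contradicting extremality. The guiding observation is that membership in $M_{\mathbb{R}}^1(\mathbb{T})$ imposes only the single complex condition $\int_{\mathbb{T}} z\, d\mu(z) = 0$, that is, two real linear constraints. A positive perturbation of $\mu$ that preserves these two constraints will be available as soon as there is enough room in the support, and ``enough room'' ought to mean four or more points, since two constraints should force extreme rays onto at most $2+1$ points.

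Concretely, I would first pick four distinct points $p_1, \dots, p_4$ in $\mathrm{supp}\,\mu$ together with pairwise disjoint open arcs $U_1, \dots, U_4$ around them, each of positive $\mu$-measure (possible precisely because the $p_i$ lie in the support). For real scalars $c_1, \dots, c_4$ I set $h = \sum_i c_i \mathbf{1}_{U_i}$ and consider the signed measure $\nu = h\, d\mu$. The requirement $\nu \in M_{\mathbb{R}}^1(\mathbb{T})$ becomes $\sum_i c_i \int_{U_i} z\, d\mu = 0$, which is two real linear equations on $(c_1, \dots, c_4) \in \mathbb{R}^4$; hence the solution space has dimension at least two.

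Within this solution space I then need a vector for which $h$ is not constant $\mu$-almost everywhere, since $h$ constant is exactly the degenerate case $\nu \propto \mu$. A short case split handles this: if $\mu$ charges the complement of $\bigcup_i U_i$, then any nonzero solution already yields a non-constant $h$; if $\mu$ is carried by $\bigcup_i U_i$, then the vector $(1,1,1,1)$ is itself a solution (because $\int z\, d\mu = 0$), and I select a solution independent of it. Either way I obtain a bounded, non-constant real $h$ with $h\, d\mu \in M_{\mathbb{R}}^1(\mathbb{T})$. Choosing $\epsilon > 0$ small enough that $1 \pm \epsilon h \geq 0$, the measures $\mu_1 = \frac{1}{2}(1+\epsilon h)\, d\mu$ and $\mu_2 = \frac{1}{2}(1-\epsilon h)\, d\mu$ lie in $E$, sum to $\mu$, and are not scalar multiples of $\mu$, which is the desired contradiction. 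Therefore $\mathrm{supp}\,\mu$ has at most three points, and being a positive combination of finitely many point masses, $\mu$ is supported at three or fewer points.

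I expect the main obstacle to be the bookkeeping in the continuous-support case: confirming that the arcs $U_i$ can be taken with positive $\mu$-mass, and correctly isolating the ``$h$ constant'' degeneracy so that the dimension count $4 - 2 = 2 > 1$ genuinely produces a perturbation transverse to $\mu$. The underlying linear-algebra heuristic is robust, but making it rigorous without assuming from the outset that $\mu$ is atomic is the step that requires care.
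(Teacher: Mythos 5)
Your proof is correct and takes essentially the same approach as the paper's: decompose near four support points, observe that membership in $M_{\mathbb{R}}^{1}(\mathbb{T})$ is only two real linear constraints (codimension $2$), extract a perturbation $\nu$ transverse to $\mu$ from the resulting dimension count, and split $\mu=\frac{1}{2}(\mu+\epsilon\nu)+\frac{1}{2}(\mu-\epsilon\nu)$ with $\epsilon$ small enough to preserve positivity. The only difference is bookkeeping: the paper partitions the support itself into four pieces $\Delta_{i}$, so $\mu$ lies in $\mathrm{span}\{\chi_{\Delta_{i}}\mu\}$ and linear independence from $\mu$ comes directly from the dimension count, whereas your open arcs may miss part of the support, which is exactly why you need the case split on whether $\mu$ charges the complement of $\bigcup_{i}U_{i}$.
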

\begin{proof}
Suppose $\mu$ is supported on four or more points in $\mathbb{T}$,
and divide the support of $\mathbb{T}$ into four parts, $\Delta_{1}$
to $\Delta_{4}$. Let $\mu_{i}=\chi_{\Delta_{i}}\mu$, where $\chi_{\Delta}$
is the indicator function on $\Delta$. Let $\mathcal{M}=\text{span}\{\mu_{1},\,\ldots,\,\mu_{4}\}$.
The dimension of $\mathcal{M}$ is 4, and since $M_{\mathbb{R}}^{1}(\mathbb{T})$
has co-dimension 2 in $M_{\mathbb{R}}(\mathbb{T})$, we must have
that \[
\text{dim}\left(\mathcal{M}\cap M_{\mathbb{R}}^{1}(\mathbb{T})\right)\geq2\,.\]
Therefore, there must exist a $\nu\in\mathcal{M}\cap M_{\mathbb{R}}^{1}(\mathbb{T})$
which is linearly independent of $\mu$. Since a measure $\alpha_{1}\mu_{1}+\cdots+\alpha_{4}\mu_{4}\in\mathcal{M}$
is positive whenever $\alpha_{1},\,\ldots,\,\alpha_{4}$ are all positive,
we can choose an $\epsilon>0$ small enough that $\mu\pm\epsilon\nu\geq0$.
Therefore, $\frac{1}{2}(\mu\pm\epsilon\nu)\in E$, but \[
\mu=\frac{1}{2}(\mu+\epsilon\nu)+\frac{1}{2}(\mu-\epsilon\nu)\]
 so $\mu$ is not an extreme direction in $E$.
\end{proof}
We can combine this with Theorem \vref{thm:ZeroMean}, that $\mathbb{E}(\mu)=0$
for $\mu\in E$.

If $\mu$ is supported at one point of $\mathbb{T}$, then it is clearly
impossible to have $\mathbb{E}(\mu)=0$.

If $\mu$ is supported at two points, then $0$ must be in the convex
hull of these two points (the line between them), and so the two points
must lie opposite each other on the circle, and both have equal weight
(if $\mu$ is to be a probability measure, this weight must be $\frac{1}{2}$).

If $\mu$ is supported at three points, then these points must be
such that $0$ is in the \emph{interior} of their convex hull (that
is, the interior of the triangle they form; if $0$ lies on one of
the lines of the triangle, then $\mu$ will only be supported on the
two points at either end of the line, so this is the degenerate case
we had before). We can also see that if $\mu$ is a probability measure,
then its weights are uniquely determined by the points it supports
-- the weights are precisely the barycentric co-ordinates of $0$,
with respect to the three points of the triangle.

We can now note that, if a measure $\mu$ in $E$ is supported on
three or fewer points, it is uniquely determined by those points,
up to multiplication by a scalar. In particular, if $\mu=t_{1}\mu_{1}+t_{2}\mu_{2}$,
for some $\mu_{1},\,\mu_{2}\in E$, then $\mu_{1}$ and $\mu_{2}$
must be supported on a subset of the support of $\mu$, so must be
scalar multiples of $\mu$, so we have characterised the extreme directions
in $E$.

Note that we can rescale any non-zero measure in $E$ to a probability
measure.

\subsection{Some Topology}

This is a convenient time to talk about the {}``space'' of test
functions. %
\begin{figure}
\includegraphics[width=7cm]{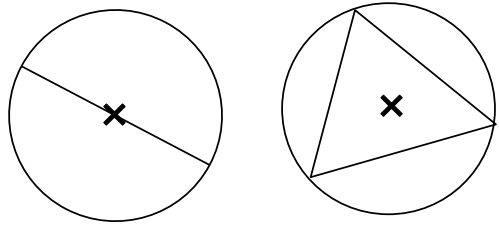}\caption{\label{fig:Topology}Types of Element in $\widehat{\Theta}$}

\end{figure}
We define a set $\widehat{\Theta}$, containing two types of element,
as shown in Figure \vref{fig:Topology}:

\begin{itemize}
\item diameters of the circle
\item triangles, with vertices on the circumference of the circle, and the
centre of the circle in their interior
\end{itemize}
To topologise this set, we say that a sequence of triangles converges
to a triangle if its points converge, and if we have a sequence of
triangles where the centre of the circle seems to converge to one
of the lines, then we say the sequence converges to the diameter,
as in Figure \vref{fig:Convergence}.%
\begin{figure}
\includegraphics[width=7cm]{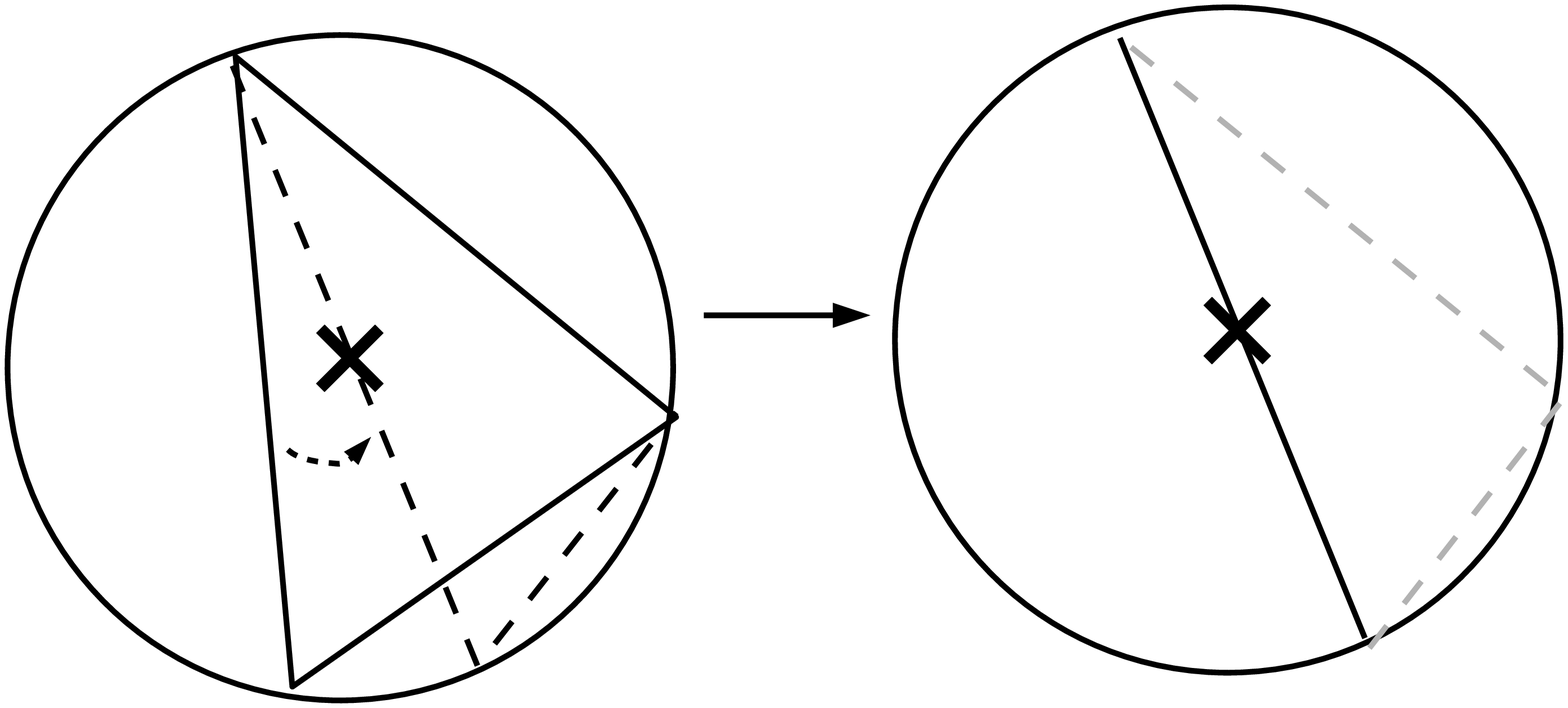}\caption{\label{fig:Convergence}Convergence in $\widehat{\Theta}$}

\end{figure}
 Essentially, the third point on the triangle disappears, which makes
sense, as in the case of the zero-mean probability measures above,
the weight of the third point would tend to zero -- in fact, if we
identify points in $\widehat{\Theta}$ with zero-mean probability
measures on the circle, as in the discussion following Theorem \ref{thm:ExtremeDirections},
we can see that this topology corresponds exactly to the weak-{*}
topology on $M_{\mathbb{R}}(\mathbb{T})$.

When we're dealing with test functions (we haven't defined what these
are yet, but this is the most convenient place for this discussion;
it may make sense to re-read this paragraph later), we can safely
identify two test functions if one is a constant, unimodular multiple
of the other. If $\mu$ is a zero-mean probability measure supported
on $n$ points, then the test function $\psi_{\mu}$ induced by $\mu$
will be an $n$-to-one Blaschke product, with $\psi_{\mu}^{\prime}(0)=0$,
$\psi_{\mu}(0)=0$ and $\psi_{\mu}(w)=1$ precisely when $w$ is in
the support of $\mu$. Conversely, if we have such a function $\psi$,
then there is a corresponding zero-mean probability measure $\mu_{\psi}$,
with support $\psi^{-1}(\{1\})$. If we have a test function $\psi_{\mu}$,
corresponding to a $\mu\in\widehat{\Theta}$, then $\widetilde{\psi}:=\overline{\psi_{\mu}(-1)}\psi_{\mu}$
is a two-or-three-to-one inner function of the type required, and
since $\widetilde{\psi}(-1)=1$, the corresponding measure $\widetilde{\mu}$
is supported at $-1$.

If we define an equivalence relation $\sim$ on $\widehat{\Theta}$
by \[
\mu_{1}\sim\mu_{2}\iff\psi_{\mu_{1}}=\lambda\psi_{\mu_{2}}\text{ for some }\left\Vert \lambda\right\Vert =1\]
 then we can define $\Theta:=\widehat{\Theta}/\sim$. By the above
reasoning, $\Theta$ can be thought of as the set of all triangles
or diameters in $\widehat{\Theta}$ with a vertex at $-1$ (i.e, on
the leftmost point of the circle). When viewed in this sense, there
is only one diameter in $\Theta$.

It's interesting to note that $\Theta$ is homeomorphic to $S^{2}$.
We show (using a certain amount of hand-waving) that $\Theta$ is
homeomorphic to $\mathbb{C}\cup\{\infty\}$. First, we set the diameter
(in $\Theta$) as $\infty$. This leaves the set of all triangles
in $\Theta$. We know that any triangle in $\Theta$ can be represented
by a triangle with a point at $-1$ (the left hand side of the circle),
so we put point one on the left of the circle, allow point two to
vary over the whole top of the circle (corresponding to the real axis),
and allow point three to vary over the range of points opposite the
arc between point one and point two%
\begin{figure}
\includegraphics[width=4cm]{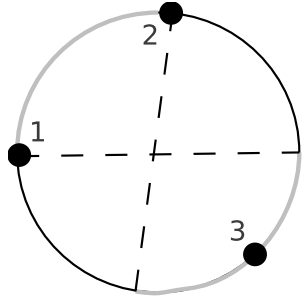}

\caption{\label{fig:Theta}The setup for $\Theta$}

\end{figure}
 (corresponding to the imaginary axis) as in Figure \vref{fig:Theta}.
As either of these points tend towards the edges of their ranges,
the triangle tends towards the diameter -- which corresponds to $\infty$.

\subsection{The Test Functions}

If we take the set of probability measures in $M_{\mathbb{R}}^{1}(\mathbb{T})$,
this is a subset of $E$, and in fact corresponds to $E_{\rho}$,
in the sense of Lemma 3.4 of \cite{AglerComputation}, where $\rho(\mu)=\mu(\mathbb{T})$.
We know, therefore, that $E_{\rho}$ is convex, and we showed before
that its extreme points are given by $\widehat{\Theta}$. $E_{\rho}$
is compact by the Banach-Alaoglu theorem, so we can apply the Krein-Milman
theorem, and we see that for any $\mu\subseteq E_{\rho}$, there exists
a probability measure $\nu_{\mu}$ on $\widehat{\Theta}$ such that%
\footnote{We are, confusingly but unavoidably, talking about integrating a measure-valued
function (that's $\vartheta$), with respect to a measure (that's
$\nu_{\mu}$). Also, the points in the space that $\nu_{\mu}$ integrates
over (that's $\widehat{\Theta}$) are, themselves, measures (they're
measures on $\mathbb{T}$)%
} \[
\mu=\int_{\widehat{\Theta}}\vartheta d\nu_{\mu}(\vartheta)\]

Now, probability measures in $M_{\mathbb{R}}^{1}(\mathbb{T})$ correspond
to analytic functions $f$ on $\mathbb{D}$ with positive real part,
$f^{\prime}(0)=0$, and $f(0)=1$, using the Herglotz representation
theorem. We define \[
h_{\vartheta}(z):=\int_{\mathbb{T}}\frac{w+z}{w-z}d\vartheta(w)\]
 and \[
\psi_{\vartheta}=\frac{h_{\vartheta}-1}{h_{\vartheta}+1}\]
for all $\vartheta\in\widehat{\Theta}$. Using the reasoning above,
we can write \begin{align*}
f(z)= & \int_{\mathbb{T}}\frac{w+z}{w-z}d\mu(w)\\
= & \int_{\mathbb{T}}\frac{w+z}{w-z}\left[\int_{\widehat{\Theta}}\left(d\vartheta(w)\right)d\nu_{\mu}(\vartheta)\right]\\
= & \int_{\widehat{\Theta}}\left[\int_{\mathbb{T}}\frac{w+z}{w-z}d\vartheta(w)\right]d\nu_{\mu}(\vartheta)\\
= & \int_{\widehat{\Theta}}h_{\vartheta}(z)d\nu_{\mu}(\vartheta)\,.\end{align*}

This gives us a new {}``Herglotz representation'', which I'll call
a \emph{Herglotz-Agler representation}:

\begin{thm}
\label{thm:SuperHerglotz}If $f$ is an analytic function on $\mathbb{D}$
with positive real part, $f^{\prime}(0)=0$, and $f(0)>1$, then there
exists some positive real measure $\nu$ on $\widehat{\Theta}$ such
that \[
f(z)=\int_{\widehat{\Theta}}h_{\vartheta}(z)d\nu(\vartheta)\,.\]

\end{thm}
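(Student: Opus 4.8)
The statement repackages the material of the preceding subsections into a Choquet-type integral representation, so the plan is simply to assemble those pieces in order. First I would invoke the classical Herglotz representation theorem (Theorem 1.1.19 of \cite{AglerComputation}): since $f$ is analytic on $\mathbb{D}$ with positive real part and $f(0)>1>0$, there is a positive measure $\mu$ on $\mathbb{T}$ with $f(z)=\int_{\mathbb{T}}\frac{w+z}{w-z}\,d\mu(w)$, and evaluating at $z=0$ gives $\mu(\mathbb{T})=f(0)$. The hypothesis $f'(0)=0$ then forces $\int_{\mathbb{T}}w\,d\mu(w)=0$ by the computation preceding Theorem \ref{thm:ZeroMean}, i.e. $\mu\perp\{\mathrm{Re}\,z,\,\mathrm{Im}\,z\}$, so that $\mu\in M_{\mathbb{R}}^{1}(\mathbb{T})$; dividing by the total mass $f(0)$ places $\mu/f(0)$ in the convex set $E_{\rho}$ of probability measures in $M_{\mathbb{R}}^{1}(\mathbb{T})$.

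Next I would feed $E_{\rho}$ into the Krein--Milman / Choquet machinery. It is convex and weak-* compact by the Banach--Alaoglu theorem, and its extreme points were identified with $\widehat{\Theta}$ in Section \ref{sub:Extreme-Directions}. Hence, in the sense of Lemma 3.4 of \cite{AglerComputation}, $\mu/f(0)$ is the barycenter of a probability measure carried by the extreme points: there is a probability measure $\nu'$ on $\widehat{\Theta}$ with $\mu/f(0)=\int_{\widehat{\Theta}}\vartheta\,d\nu'(\vartheta)$. Setting $\nu=f(0)\,\nu'$ then produces a positive measure on $\widehat{\Theta}$ of total mass $f(0)$, which will be the measure claimed in the statement.

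Finally I would substitute this representation of $\mu$ back into the Herglotz integral and interchange the order of integration, obtaining $f(z)=\int_{\widehat{\Theta}}\left(\int_{\mathbb{T}}\frac{w+z}{w-z}\,d\vartheta(w)\right)d\nu(\vartheta)=\int_{\widehat{\Theta}}h_{\vartheta}(z)\,d\nu(\vartheta)$, which is exactly the asserted Herglotz--Agler representation.

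The step I expect to be the main obstacle is the passage from Krein--Milman, which on its own only delivers the closed convex hull of the extreme points, to a genuine integral representation, together with the bookkeeping of the \emph{measure-valued} integration: each $\vartheta\in\widehat{\Theta}$ is itself a measure on $\mathbb{T}$, so the identity $\mu/f(0)=\int_{\widehat{\Theta}}\vartheta\,d\nu'$ must be read as an equality of measures tested against $C_{\mathbb{R}}(\mathbb{T})$, and the Fubini interchange must be justified for this mixed object. Since for each fixed $z\in\mathbb{D}$ the kernel $w\mapsto\frac{w+z}{w-z}$ is bounded and continuous on $\mathbb{T}$, every measure appearing is finite and Fubini applies without difficulty; the real content is that Lemma 3.4 of \cite{AglerComputation} supplies the Choquet representation once the extreme points of $E_{\rho}$ have been pinned down, as we have done.
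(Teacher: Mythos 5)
Your proposal is correct and follows essentially the same route as the paper: Herglotz representation, zero-mean condition placing the measure in $M_{\mathbb{R}}^{1}(\mathbb{T})$, the Krein--Milman/Choquet representation over the extreme points $\widehat{\Theta}$ via Lemma 3.4 of \cite{AglerComputation}, and a Fubini interchange. If anything, you are slightly more careful than the paper, since you make explicit the rescaling $\mu/f(0)$ needed to pass from the probability-measure case $f(0)=1$ (which is what the paper's preceding discussion actually treats) to the stated hypothesis $f(0)>1$, and you flag that Krein--Milman alone gives only a closed convex hull rather than an integral representation.
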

We now prove the main result, which uses terminology from \cite{DritschelInterpolation}.
I'll be using $\Psi$ to refer to the set of test functions associated
with $\Theta$, so $\Psi:=\left\{ \psi_{\vartheta}:\,\vartheta\in\Theta\right\} $):

\begin{thm}
The two spaces $H^{\infty}\left(\mathcal{K}_{\Psi}\right)$ and $H_{1}^{\infty}(\mathbb{D})$
are isometrically isomorphic, that is, $H^{\infty}\left(\mathcal{K}_{\Psi}\right)=H_{1}^{\infty}(\mathbb{D})$
and $\Vert\cdot\Vert_{\mathcal{K}_{\Psi}}=\Vert\cdot\Vert_{H_{1}^{\infty}(\mathbb{D})}$
\end{thm}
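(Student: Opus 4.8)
The plan is to show that the two unit balls coincide as sets of functions; the isometry then follows because each norm is the Minkowski gauge of its own unit ball, so equal balls force equal norms. I would split this into the two containments and, happily, avoid the general realisation machinery in favour of a direct argument built on Theorem \ref{thm:SuperHerglotz}.

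For $\mathrm{ball}\,H^\infty(\mathcal K_\Psi)\subseteq\mathrm{ball}\,H_1^\infty(\mathbb D)$ I would exhibit two explicit members of $\mathcal K_\Psi$ and read off the two defining conditions of $H_1^\infty$. Since every $\psi_\vartheta$ is a Blaschke product with $\|\psi_\vartheta\|_\infty\le1$, the classical Schur/Pick criterion gives $(1-\psi_\vartheta(x)\overline{\psi_\vartheta(y)})\,k(x,y)\ge0$ for the Szegő kernel $k(x,y)=(1-\overline xy)^{-1}$, so $k\in\mathcal K_\Psi$; testing $f$ against $k$ forces $\|f\|_\infty\le1$. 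To capture $f'(0)=0$ I would use that each $\psi_\vartheta$ vanishes to second order at $0$ (as $\psi_\vartheta(0)=\psi_\vartheta'(0)=0$). Let $\mathcal H_1:=H^2\ominus\mathbb Cz=\mathbb C\oplus z^2H^2$, with reproducing kernel $\kappa$. Because $\psi_\vartheta\cdot\mathcal H_1\subseteq z^2H^2\subseteq\mathcal H_1$ and $M_{\psi_\vartheta}$ is a contraction on $H^2$, its restriction to $\mathcal H_1$ is contractive, which is exactly $(1-\psi_\vartheta(x)\overline{\psi_\vartheta(y)})\kappa(x,y)\ge0$; hence $\kappa\in\mathcal K_\Psi$. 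Testing $f$ against $\kappa$ then makes $f$ a contractive multiplier of $\mathcal H_1$, so $f=M_f1\in\mathcal H_1$, i.e. $f'(0)=0$. Thus $f\in\mathrm{ball}\,H_1^\infty(\mathbb D)$.

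The substantive direction is $\mathrm{ball}\,H_1^\infty(\mathbb D)\subseteq\mathrm{ball}\,H^\infty(\mathcal K_\Psi)$, where the engine is the Herglotz--Agler representation. Given $f$ with $\|f\|_\infty\le1$ and $f'(0)=0$, set $H=(1+f)/(1-f)$, which has positive real part and, by the chain-rule computation preceding Theorem \ref{thm:ZeroMean}, satisfies $H'(0)=0$. Applying Theorem \ref{thm:SuperHerglotz} to $H$ (after subtracting the harmless constant $i\,\mathrm{Im}\,H(0)$, which cancels below) yields a positive $\nu$ on $\widehat\Theta$ with $H(x)+\overline{H(y)}=\int_{\widehat\Theta}\bigl(h_\vartheta(x)+\overline{h_\vartheta(y)}\bigr)\,d\nu(\vartheta)$. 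A Cayley-transform computation using $f=(H-1)/(H+1)$ and $\psi_\vartheta=(h_\vartheta-1)/(h_\vartheta+1)$ converts this into an Agler decomposition
\[ 1-\overline{f(y)}f(x)=\int_{\widehat\Theta}\bigl(1-\overline{\psi_\vartheta(y)}\psi_\vartheta(x)\bigr)\,K_\vartheta(x,y)\,d\nu(\vartheta), \]
where $K_\vartheta(x,y)=a(x)\overline{a(y)}\,b_\vartheta(x)\overline{b_\vartheta(y)}$, with $a=2/(H+1)$ and $b_\vartheta=1/(1-\psi_\vartheta)$, is rank one and hence positive. Finally, for any $k\in\mathcal K_\Psi$ each integrand $\bigl(1-\overline{\psi_\vartheta(y)}\psi_\vartheta(x)\bigr)k(x,y)$ is positive by membership in $\mathcal K_\Psi$, its Schur product with $K_\vartheta$ is positive, and integration against $\nu\ge0$ preserves positivity, giving $(1-f(x)\overline{f(y)})k(x,y)\ge0$. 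Hence $f\in\mathrm{ball}\,H^\infty(\mathcal K_\Psi)$.

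I expect the main obstacle to be the middle step of the hard direction: recognising that the \emph{scalar} Herglotz--Agler representation of $H$ upgrades, via the Cayley transform, to a genuine two-variable positive-kernel identity, and then controlling the positivity bookkeeping---that each $K_\vartheta$ is positive, that the Schur-product-under-an-integral step is legitimate (an integral form of the Schur product theorem, requiring routine measurability and integrability), and that the degenerate cases are dispatched (a unimodular constant $f$, for which $1-f(x)\overline{f(y)}\equiv0$, and the poles of $H$ where $f=1$). By contrast, $\kappa\in\mathcal K_\Psi$ and the passage from coinciding unit balls to equal norms by homogeneity are routine. One could instead invoke the realisation/duality theorem of \cite{DritschelInterpolation}, which renders ``$f\in\mathrm{ball}\,H^\infty(\mathcal K_\Psi)$'' equivalent to the existence of such a decomposition; but the explicit construction above is cleaner and keeps the argument self-contained.
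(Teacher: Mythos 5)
Your proposal is correct, and its substantive (hard) direction is essentially the paper's own argument: both pass to the Cayley transform $H=(1+f)/(1-f)$, invoke the Herglotz--Agler representation of Theorem \ref{thm:SuperHerglotz}, and arrive at the same integral decomposition $1-f(x)\overline{f(y)}=\int_{\widehat{\Theta}}\bigl(1-\psi_{\vartheta}(x)\overline{\psi_{\vartheta}(y)}\bigr)K_{\vartheta}(x,y)\,d\nu(\vartheta)$ with the same rank-one kernels built from $2/(H+1)$ and $1/(1-\psi_{\vartheta})$; the paper then packages this as a $C_{b}(\Psi)^{*}$-valued kernel $\Gamma$ and leans on the realisation theorem of \cite{DritschelInterpolation}, whereas you close the loop directly with the Schur-product-and-integration argument, which is self-contained and is in any case exactly the proof of the implication being used. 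Two genuine differences are worth recording. First, for the easy containment the paper cites the Nevanlinna--Pick theorem of \cite{H1Infinity} (since $\mathcal{K}_{1}^{\infty}\subseteq\mathcal{K}_{\Psi}$, positivity against $\mathcal{K}_{\Psi}$ forces membership in the ball of $H_{1}^{\infty}$), while you avoid that citation entirely by exhibiting two explicit members of $\mathcal{K}_{\Psi}$ --- the Szegő kernel and the reproducing kernel of $\mathbb{C}\oplus z^{2}H^{2}$ --- and reading off $\Vert f\Vert_{\infty}\leq1$ and $f^{\prime}(0)=0$ from standard multiplier theory; this buys independence from \cite{H1Infinity}, at the cost of a little reproducing-kernel bookkeeping. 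Second, the paper handles the case $f(0)\neq0$ by a separate M\"obius-transform identity after first assuming $f(0)=0$, whereas you absorb it from the outset by observing that the imaginary constant $i\,\mathrm{Im}\,H(0)$ in the Herglotz representation cancels in $H(x)+\overline{H(y)}$; both work, and yours is arguably tidier. One small point you should make explicit: your integral runs over $\widehat{\Theta}$, but $\mathcal{K}_{\Psi}$ only guarantees positivity of $(1-\psi(x)\overline{\psi(y)})k(x,y)$ for $\psi\in\Psi$, i.e.\ for representatives indexed by $\Theta=\widehat{\Theta}/\sim$; you need the (trivial) remark, which the paper does make, that $\overline{\psi_{\vartheta}(-1)}\psi_{\vartheta}\in\Psi$ and that $1-\psi_{\vartheta}(x)\overline{\psi_{\vartheta}(y)}$ is unchanged when $\psi_{\vartheta}$ is multiplied by a unimodular constant, so positivity against $\Psi$ does give positivity of every integrand.
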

\begin{proof}
One way is simple. Since $\Psi\subseteq H_{1}^{\infty}(\mathbb{D})$,
we know that $\mathcal{K}_{\Psi}$ contains the set $\mathcal{K}_{1}^{\infty}$
of reproducing kernels given in \cite{H1Infinity}, so if $\zeta\in H^{\infty}\left(\mathcal{K}_{\Psi}\right)$,
and $\left\Vert \zeta\right\Vert _{\mathcal{K}_{\Psi}}\leq1$, then
\[
\left(\left[1-\zeta(z)\overline{\zeta(w)}\right]k(x,\, w)\right)\geq0\]
 for all $k\in\mathcal{K}_{1}^{\infty}$. Therefore, $\zeta$ must
be in $H_{1}^{\infty}(\mathbb{D})$, with $\left\Vert \zeta\right\Vert _{H_{1}^{\infty}(\mathbb{D})}\leq1$,
so $H^{\infty}\left(\mathcal{K}_{\Psi}\right)\subseteq H_{1}^{\infty}(\mathbb{D})$.

Now, suppose that $\zeta\in H_{1}^{\infty}(\mathbb{D})$ and $\left\Vert \zeta\right\Vert _{H_{1}^{\infty}(\mathbb{D})}\leq1$.
For now, we also suppose that $\zeta(0)=0$. We let\[
f:=\frac{1+\zeta}{1-\zeta}\]
 so \[
\zeta=\frac{f-1}{f+1}\]
 Hence\begin{equation}
1-\zeta(z)\overline{\zeta(w)}=2\frac{f(z)+\overline{f(w)}}{(f(z)+1)(\overline{f(w)}+1)}\label{eq:zetaDef}\end{equation}
 We know that $f$ has positive real part, $f(0)=1$ and $f^{\prime}(0)=0$,
so we can use our Herglotz-Agler representation, Theorem \vref{thm:SuperHerglotz},
and find that there is a measure $\nu$ on $\widehat{\Theta}$ such
that \[
f=\int_{\widehat{\Theta}}h_{\vartheta}d\nu(\vartheta)\,.\]

We can then show, using the definition of $\psi_{\vartheta}$ and
\eqref{eq:zetaDef}, that \[
1-\zeta(z)\overline{\zeta(w)}=\int_{\widehat{\Theta}}\frac{1-\psi_{\vartheta}(z)\overline{\psi_{\vartheta}(w)}}{\left(f(z)+1\right)\left(1-\psi_{\vartheta}(z)\right)\left(1-\overline{\psi_{\vartheta}(w)}\right)\left(\overline{f(w)}+1\right)}d\nu(\vartheta)\]
We know that if $\vartheta\in\widehat{\Theta}$, then $\overline{\psi_{\vartheta}(-1)}\psi_{\vartheta}\in\Psi$.
If we define a positive kernel $\Gamma:\mathbb{D}\times\mathbb{D}\to C_{b}(\Psi)^{*}$
by \[
\Gamma(z,\, w)\alpha=\int_{\widehat{\Theta}}\frac{\alpha\left(\overline{\psi_{\vartheta}(-1)}\psi_{\vartheta}\right)}{\left(f(z)+1\right)\left(1-\psi_{\vartheta}(z)\right)\left(1-\overline{\psi_{\vartheta}(w)}\right)\left(\overline{f(w)}+1\right)}d\nu(\vartheta)\]
 We can then see that\[
1-\zeta(z)\overline{\zeta(w)}=\Gamma(z,\, w)(1-E(z)E(w)^{*})\]
 so $\zeta\in H^{\infty}(\mathcal{K}_{\Psi})$ and $\left\Vert \zeta\right\Vert _{\mathcal{K}_{\Theta}}\leq1$.

To see that this holds when $\zeta(0)\neq0$, simply note that \[
1-\left(\frac{\zeta(z)-a}{1-\bar{a}\zeta(z)}\right)\overline{\left(\frac{\zeta(w)-a}{1-\bar{a}\zeta(w)}\right)}=\frac{\left(1-a\bar{a}\right)\left(1-\zeta(z)\overline{\zeta(w)}\right)}{\left(1-\bar{a}\zeta(z)\right)\left(1-a\overline{\zeta(w)}\right)}\,.\]

Therefore, $\zeta\in H^{\infty}\left(\mathcal{K}_{\Psi}\right)$,
and $\left\Vert \zeta\right\Vert _{\mathcal{K}_{\Psi}}\leq1$, so
$H^{\infty}\left(\mathcal{K}_{\Psi}\right)=H_{1}^{\infty}(\mathbb{D})$
and $\Vert\cdot\Vert_{\mathcal{K}_{\Psi}}=\Vert\cdot\Vert_{H_{1}^{\infty}(\mathbb{D})}$,
as required.
\end{proof}

\subsection{Minimality}

As in \cite{DritschelInterpolation}, we show that this set of test
functions is minimal, in the sense that there is no closed subset
$C$ of $\Psi$ so that $C$ is a set of test functions for $H_{1}^{\infty}$.
First, we need a lemma.

\begin{lem}
\label{lem:MeasureLemma}If, for some measure $\mu$ on a space $C$,
and some separable Hilbert space $H$, $M\in B(H)$, $f\in B(H)\otimes L^{1}(\mu)$,
$f(x)\geq0$ $\mu$-almost everywhere, and $M\geq\int_{C}f(x)\, d\mu(x)$,
then for all scalars $\delta>0$ there exists some $C^{\prime}\subseteq C$
and some scalar $N_{\delta}>0$, such that $\mu(C-C^{\prime})<\delta$
and $M\geq N_{\delta}f(x)$.
\end{lem}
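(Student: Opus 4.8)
The plan is to reduce the operator inequality $M \ge N_\delta f(x)$ to a scalar thresholding argument by conjugating with $M^{-1/2}$, the essential point being that $f$, read as an element of the algebraic tensor product $B(H)\otimes L^1(\mu)$, takes its values in a \emph{fixed finite-dimensional} subspace of $B(H)$. Writing $f=\sum_{i=1}^m A_i\,\phi_i$ with $A_i\in B(H)$ and $\phi_i\in L^1(\mu)$, every $f(x)$ lies in $V=\operatorname{span}\{A_i\}$.

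First I would dispose of $\ker M$. For $\xi\in\ker M$ the hypothesis $M\ge\int_C f\,d\mu$ gives $\int_C\langle f(x)\xi,\xi\rangle\,d\mu(x)\le\langle M\xi,\xi\rangle=0$, and since $f(x)\ge0$ the integrand vanishes $\mu$-almost everywhere; running over a countable dense subset of $\ker M$ and using $f(x)\ge0$ shows that $f(x)$ annihilates $\ker M$ for a.e.\ $x$. Hence I may replace $H$ by $\overline{\operatorname{ran}M}$ and assume that $M$ is injective with dense range.

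Next I would pass to the density of $f$ relative to $M$. Formally set $g(x)=M^{-1/2}f(x)M^{-1/2}$; the bound $\int_C f\,d\mu\le M$ becomes $\int_C\langle g(x)\xi,\xi\rangle\,d\mu(x)\le\|\xi\|^2$ on the form domain of $M^{-1/2}$, so each $x\mapsto\langle g(x)\xi,\xi\rangle$ is finite a.e. The crux, and the step I expect to be the main obstacle, is to promote this to genuine operator boundedness: $g(x)\in B(H)$ with $\|g(x)\|<\infty$ for a.e.\ $x$, equivalently $f(x)\le N(x)M$ for some finite scalar $N(x)$. This is precisely where the finiteness of the decomposition $f=\sum_{i=1}^m A_i\phi_i$ must enter, since it confines $g(x)$ to the fixed finite-dimensional operator space $\operatorname{span}\{M^{-1/2}A_iM^{-1/2}\}$ and lets one trade control over the infinitely many test vectors $\xi$ for control over finitely many coefficient functions. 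That some such finiteness is indispensable is shown by an infinite rank-one \emph{Rademacher} example $f(x)=v(x)v(x)^*$, with $v(x)\notin\operatorname{ran}(M^{1/2})$ arranged so that $\int_C f\,d\mu=M$, for which $f(x)\le NM$ fails for every scalar $N$ and every $x$; so the bare integral bound alone never suffices.

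Finally I would threshold. Once $\|g(x)\|<\infty$ a.e., the sets $B_t=\{x:\|g(x)\|>t\}$ decrease as $t\to\infty$ to a $\mu$-null set, so by continuity of measure from above (taking $\mu$ finite, as it is in the application) I can choose $t$ large enough that $\mu(B_t)<\delta$ and set $C'=C\setminus B_t$. On $C'$ one has $\|g(x)\|\le t$, hence $g(x)\le tI$, which unwinds to $f(x)\le tM$, i.e.\ $M\ge N_\delta f(x)$ with $N_\delta=t^{-1}>0$. Together with $\mu(C-C')=\mu(B_t)<\delta$ this is exactly the assertion of the lemma.
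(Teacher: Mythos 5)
Your proposal has a genuine gap, and it is exactly the step you flag yourself: nothing in it proves the central claim that $\|g(x)\|<\infty$ for $\mu$-almost every $x$, i.e.\ that $f(x)\le N(x)M$ for some finite scalar function $N(x)$. The reduction modulo $\ker M$ and the final thresholding of $B_t=\{x:\|g(x)\|>t\}$ are routine bookkeeping; the entire content of the lemma is the middle step, and for it you offer only the expectation that the finiteness of the decomposition $f=\sum_{i=1}^{m}A_i\phi_i$ ``must enter''. No mechanism for that trade is exhibited, so what you have is a correct outer reduction plus an accurate diagnosis, not a proof. (Secondary issues: when $\mathrm{ran}\,M$ is not closed, $M^{-1/2}$ is unbounded and $g(x)=M^{-1/2}f(x)M^{-1/2}$ is not obviously well defined; and measurability of $x\mapsto\|g(x)\|$ needs a word. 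These sit inside the missing step.) If you want to finish along your lines, the finite-dimensionality genuinely suffices, but by convex geometry rather than operator theory: the values $f(x)$ lie in the closed finite-dimensional cone $P=\mathrm{span}\{A_1,\dots,A_m\}\cap B(H)_+$; every partial integral $\int_E f\,d\mu$ lies in the smallest face of $P$ containing $W=\int_C f\,d\mu$ (faces of cones absorb summands); faces of closed convex sets in finite dimensions are closed, so $f(x)$ lies in that face almost everywhere; and membership in that face says precisely that $f(x)\le c(x)W\le c(x)M$, after which your thresholding applies.

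The most valuable part of your proposal is the counterexample, which is correct and can be made concrete: take $H=L^2[0,1]$, $C=[0,1]$ with Lebesgue measure, $v(x)=\chi_{[x,1]}$ and $f(x)=v(x)v(x)^*$. Then $f$ is norm-continuous and positive, and $\int_0^1 f(x)\,dx=M$, the integral operator with kernel $\min(s,t)$, i.e.\ $M=JJ^*$ with $J$ the Volterra operator. Since $\mathrm{ran}(M^{1/2})=\mathrm{ran}(J)$ consists of absolutely continuous functions and no $\chi_{[x,1]}$ is absolutely continuous, Douglas' lemma gives $f(x)\not\le NM$ for every $N>0$ and every $x\in(0,1)$. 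So the lemma is false for general Bochner-integrable $f$, and any valid proof must use that $f$ takes values in a fixed finite-dimensional space of operators. This makes the comparison with the paper striking: the paper's proof takes a completely different route from yours --- it shows, for each vector $\varphi$, that $C-C_0^{\varphi}$ is null, and then passes from the countable dense family $\Phi$ to the operator inequality via the identity
\[
\bigcap_{N>0}\bigcup_{\varphi\in\Phi}\left(C-C_{N}^{\varphi}\right)=\bigcup_{\varphi\in\Phi}\bigcap_{N>0}\left(C-C_{N}^{\varphi}\right),
\]
and it never invokes the hypothesis $f\in B(H)\otimes L^1(\mu)$ at all. That interchange of intersection and union is precisely the illegitimate trade of infinitely many test vectors for a single uniform one that you warned about, and your example refutes it: there the left-hand side is all of $C$ while the right-hand side is empty. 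So your attempt is incomplete at its crux, but the crux you identified is real, and the paper's own argument for the lemma breaks down at exactly that point.
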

\begin{proof}
Define, for $N>0$, and $\varphi\in H$, \begin{gather*}
C_{N}^{\varphi}=\left\{ x\in C:\,\left\langle \left(M-Nf(x)\right)\varphi,\,\varphi\right\rangle \geq0\right\} \\
C_{N}=\left\{ x\in C:\, M\geq Nf(x)\right\} \\
C_{0}=\bigcup_{N>0}C_{N}\qquad C_{0}^{\varphi}=\bigcup_{N>0}C_{N}^{\varphi}\end{gather*}

We can see that, for any given $\varphi\in H$, $C-C_{0}^{\varphi}$
is a $\mu$-null set. To see this, note that for $x$ to be in $C-C_{0}^{\varphi}$,
we'd need to have $\left\langle M\varphi,\,\varphi\right\rangle =0$
but $\left\langle f(x)\varphi,\,\varphi\right\rangle >0$, however,
\[
\left\langle M\varphi,\varphi\right\rangle \geq\left\langle \int_{C}f(x)\, d\mu(x)\,\varphi,\varphi\right\rangle \geq\int_{C-C_{0}^{\varphi}}\left\langle f(x)\,\varphi,\,\varphi\right\rangle d\mu(x)\geq0\,,\]
which is a contradiction.

If $\Phi$ is a countable dense subset of the unit ball in $H$, we
can see that \[
C_{N}=\bigcap_{\varphi\in\Phi}C_{N}^{\varphi}\]
 and so \begin{multline*}
C-C_{0}=\bigcap_{N>0}\left(C-C_{N}\right)=\bigcap_{N>0}\left(\bigcup_{\varphi\in\Phi}\left[C-C_{N}^{\varphi}\right]\right)\\
=\bigcup_{\varphi\in\Phi}\left(\bigcap_{N>0}\left[C-C_{N}^{\varphi}\right]\right)=\bigcup_{\varphi\in\Phi}\left(C-C_{0}^{\varphi}\right)\,,\end{multline*}
 which is a countable union of null-sets, and so is a null set.

We can now see that as $N\to0$, $\mu(C-C_{N})\to0$, and $M\geq Nf(x)$
for all $x\in C_{N}$, so our result is proved.
\end{proof}
\begin{thm}
\label{thm:Minimal}No proper closed subset $C$ of $\Psi$ is a set
of test functions for $H_{1}^{\infty}$.
\end{thm}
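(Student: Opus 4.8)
The plan is to argue by contradiction. Suppose some closed $C\subsetneq\Psi$ were a set of test functions for $H_{1}^{\infty}$, so that (by the preceding isometric isomorphism theorem) $H^{\infty}(\mathcal{K}_{C})=H_{1}^{\infty}$ with equal norms. I would pick $\psi_{0}=\psi_{\vartheta_{0}}\in\Psi\setminus C$; since $C$ is closed and $\vartheta\mapsto\psi_{\vartheta}$ is continuous, there is an open neighbourhood $U$ of $\vartheta_{0}$ in $\Theta$ with $\{\psi_{\vartheta}:\vartheta\in U\}\cap C=\emptyset$. The aim is then to show that $\psi_{0}$ cannot be reconstructed from test functions that stay away from $\vartheta_{0}$, contradicting $\psi_{0}\in H^{\infty}(\mathcal{K}_{C})$.

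First I would produce a $C$-decomposition of $\psi_{0}$. As $\psi_{0}$ lies in the unit ball of $H_{1}^{\infty}=H^{\infty}(\mathcal{K}_{C})$, the realization theorem of \cite{DritschelInterpolation} furnishes a positive kernel-valued measure on $C$ giving an Agler decomposition $1-\psi_{0}(z)\overline{\psi_{0}(w)}=\int_{C}(1-\psi(z)\overline{\psi(w)})\,d\Gamma_{z,w}(\psi)$. Schur-multiplying this identity by the Szegő kernel $(1-\overline{w}z)^{-1}$ renders every ingredient positive: writing $k_{\psi}(z,w)=\frac{1-\psi(z)\overline{\psi(w)}}{1-\overline{w}z}$ for the model-space reproducing kernel of the finite Blaschke product $\psi$, and disintegrating $\Gamma$ against a scalar control measure $\mu$ on $C$, I obtain positive kernels $f_{\psi}\geq0$ (each a Schur product of $k_{\psi}$ with a positive Gram kernel) together with $M(z,w)=\frac{1-\psi_{0}(z)\overline{\psi_{0}(w)}}{1-\overline{w}z}=k_{\psi_{0}}(z,w)$ satisfying $M=\int_{C}f_{\psi}\,d\mu$. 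This is precisely the hypothesis of Lemma \ref{lem:MeasureLemma}: applying it after restricting to a finite set of points (so that $M\in B(\mathbb{C}^{n})$) yields, for every $\delta>0$, a subset $C'\subseteq C$ with $\mu(C\setminus C')<\delta$ and $M\geq N_{\delta}f_{\psi}$ for all $\psi\in C'$.

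Next I would read off what the pointwise domination $M\geq N_{\delta}f_{\psi}$ says. Since $M=k_{\psi_{0}}$ is a finite-rank positive kernel whose range is the model space of $\psi_{0}$, the inequality forces $\operatorname{ran}f_{\psi}\subseteq\operatorname{ran}M$, i.e.\ the reproducing-kernel space of $f_{\psi}$ sits inside the model space of $\psi_{0}$. Using the explicit description of the $\psi_{\vartheta}$ as the Blaschke products attached to the zero-mean probability measures $\vartheta$ of Theorem \ref{thm:SuperHerglotz} — and the rigidity established in the extreme-direction analysis, namely that such a $\vartheta$ is pinned down by its two or three support points — this containment should force $\operatorname{supp}\vartheta\subseteq\operatorname{supp}\vartheta_{0}$, hence $\vartheta=\vartheta_{0}$ and $\psi=\psi_{0}$. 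Consequently $C'\subseteq\{\psi_{0}\}$; but $\psi_{0}\notin C\supseteq C'$, so $C'=\emptyset$ and $\mu(C)=\mu(C\setminus C')<\delta$ for every $\delta>0$, giving $\mu\equiv0$. Then $M=\int_{C}f_{\psi}\,d\mu=0$, contradicting $M(0,0)=1-|\psi_{0}(0)|^{2}=1$. This contradiction shows no proper closed $C$ can be a set of test functions.

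The hard part will be the localization step of the third paragraph: upgrading the abstract domination $M\geq N_{\delta}f_{\psi}$ (equivalently $\operatorname{ran}f_{\psi}\subseteq\operatorname{ran}k_{\psi_{0}}$) to the geometric statement $\operatorname{supp}\vartheta\subseteq\operatorname{supp}\vartheta_{0}$. The obstruction is that the Gram factor buried in $f_{\psi}$ is not controlled by the realization, so the support points of $\vartheta$ — the boundary points where $\psi=1$ — need not appear as visible singularities of $f_{\psi}$; making the implication rigorous will require exploiting the rigidity, indeed the exposedness, of the extreme configuration $\vartheta_{0}$ — essentially that $h_{\vartheta_{0}}$, corresponding to an extreme point of $E_{\rho}$, admits $\delta_{\vartheta_{0}}$ as its only representing measure — to exclude any contribution from test functions bounded away from $\vartheta_{0}$. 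A secondary technical point is justifying the disintegration of the kernel-valued $\Gamma$ into a scalar measure $\mu$ with positive densities $f_{\psi}$, which is what makes Lemma \ref{lem:MeasureLemma} applicable in the first place.
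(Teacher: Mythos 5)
Your first two paragraphs track the paper's actual argument quite closely: the paper likewise takes an Agler decomposition of $\psi_{0}$ over $C$ (obtained via a four-point restriction and Theorems 2.2--2.3 of \cite{DritschelInterpolation}, which is how it justifies the scalar measure $\mu$ and finitely many densities $h_{l}$ that you worry about), multiplies by the Szeg\H{o} kernel, applies Lemma \ref{lem:MeasureLemma}, and converts the pointwise domination into a range containment via Douglas' lemma. The gap is in your third paragraph, and it is not merely ``the hard part'' you defer --- the implication you need is false. The Gram factor $h_{l}(\cdot,\,\psi)$ can carry the model space of $\psi$ into the model space of $\psi_{0}$ even when $\psi\neq\psi_{0}$: writing $a_{\psi}$, $a_{0}$ for the third zeroes of $\psi$, $\psi_{0}$ (the paper chooses $\vartheta_{0}$ neither a diameter nor equilateral, so $a_{0}\neq0$; your outline never uses this freedom), the multiplier
\[
h_{l}(y,\,\psi)=c\,\frac{1-\overline{a_{\psi}}y}{1-\overline{a_{0}}y}
\]
satisfies $h_{l}k_{a_{\psi}}=c\,k_{a_{0}}$ and maps $k_{0}$, $k_{0^{\prime}}$ into $\mathrm{span}\{k_{0},\,k_{0^{\prime}},\,k_{a_{0}}\}$ as well, so $\mathrm{ran}\,f_{\psi}\subseteq\mathrm{ran}\,k_{\psi_{0}}$ holds for \emph{every} $\psi$, not only for $\psi$ near $\psi_{0}$. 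Consequently your set $C^{\prime}$ need not shrink to $\{\psi_{0}\}$, $\mu$ need not vanish, and no contradiction of the form $M=0$ is available; the exposedness fix you sketch runs into exactly the same obstruction (the decomposition over $C$ involves these multipliers, so it does not translate into a Herglotz--Agler representation of $h_{\vartheta_{0}}$ to which uniqueness of representing measures could be applied --- the paper's closing remark explicitly notes that no obvious implementation of this idea exists).

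What the paper does instead, after the range containment, is to prove that $h_{l}(\cdot,\,\psi)$ must have precisely the rational form displayed above, by extending meromorphically to the Riemann sphere and counting poles in equations \eqref{eq:sim-1}--\eqref{eq:sim-3}. It then substitutes this explicit form back into the global identity \eqref{eq:real}, clears denominators, expands in powers of $z$ and $\overline{w}$, and compares the $\overline{w}$, $z^{3}\overline{w}^{3}$ and $z^{2}\overline{w}^{2}$ coefficients to obtain the three scalar identities \eqref{eq:con-1}--\eqref{eq:con-3}. These force equality in a Cauchy--Schwarz inequality, and equality requires linear dependence of two vectors that are genuinely independent because $a_{\psi}$ varies over $C$ while $a_{0}$ is fixed and corresponds to no point of $C$ --- that is the contradiction. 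So the proof cannot be localized at the level of the operator domination; the contradiction lives in the global coefficient identities, and your plan would need to be replaced from the third paragraph onward by this (or some equivalent) computation.
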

\begin{proof}
Suppose, towards an eventual contradiction, that $C$ is a proper
closed subset of $\Psi$ and $\psi_{0}=\psi_{\vartheta_{0}}\notin C$.
Since $C$ is closed, its complement is open, so we can safely assume
that $\vartheta_{0}$ is not a diameter, and not an equilateral triangle%
\footnote{If we do the calculations, we discover that these two possibilities
correspond to the test functions $z^{2}$ and $-z^{3}$, which are
inconvenient corner cases%
}.

We notice that the differentiating kernels we defined in Section \vref{sec:Differentiating-Kernels}
are rational functions, so we can extend them to the entire Riemann
sphere%
\footnote{The fact that this is homeomorphic to $\Theta$ is not relevant here,
and appears to be a coincidence.%
}, $\mathbb{C}\cup\{\infty\}$. If we do this, then we see that if
$x\neq0$, $k_{x^{(n)}}$ has $n+1$ poles, all at $\overline{x^{-1}}$,
and $k_{0^{(n)}}$ has $n$ poles, all at $\infty$.

The kernels \[
\Delta_{\vartheta}(z,\, w):=\left(1-\psi_{\vartheta}(z)\psi_{\vartheta}(w)^{*}\right)k(w,\, z)\]
 are positive and have rank at most three ($k$ is just the Szegő
kernel). To see this, first note that $\psi_{\vartheta}$ has at most
three zeroes%
\footnote{If $\vartheta$ is a diameter, then it has two zeroes, otherwise it
has three.%
}, and that at least two of them must be at zero, as $\psi_{\vartheta}(0)=0$
and $\psi_{\vartheta}^{\prime}(0)=0$. Also note that $M_{\vartheta}$,
the operator of multiplication by $\psi_{\vartheta}$, is an isometry
on $H^{2}$, so $1-M_{\vartheta}M_{\vartheta}^{*}$ is the projection
onto \[
\mathfrak{M}_{\vartheta}:=\ker M_{\vartheta}^{*}=\text{Span}\left\{ k_{0},\, k_{0^{\prime}},\, k_{a_{\vartheta}}\right\} \]
 where $a_{\vartheta}$ is the third zero of $\vartheta$ (if $\vartheta$
has three zeroes at zero, then $a_{\vartheta}=0^{\prime\prime}$;
if $\vartheta$ is a diameter, then the span doesn't include $k_{a_{\vartheta}}$).

Now, \[
\Delta_{\vartheta}(z,\, w)=\left\langle \left(1-M_{\vartheta}M_{\vartheta}^{*}\right)k_{w},\, k_{z}\right\rangle :=\left\langle P_{\vartheta}k_{w},\, k_{z}\right\rangle =\left\langle P_{\vartheta}k_{w},\, P_{\vartheta}k_{z}\right\rangle \,,\]
 and we can think of this as being a holomorphic function in $z$,
and an antiholomorphic function in $w$. If we think of the antiholomorphic
function as being in the dual of $H^{2}$, then \[
\Delta_{\vartheta}\in H^{2}\otimes\left(H^{2}\right)^{*}\cong B(H^{2})\,.\]

More explicitly, $\Delta_{\vartheta}$ defines an operator on $H^{2}$
as \[
\Delta_{\vartheta}f(z):=\int_{\mathbb{T}}\Delta_{\vartheta}(z,\, w)\, f(w)\, ds(w)\,,\]
 so\begin{align*}
\left\langle \Delta_{\vartheta}f,g\right\rangle  & =\int_{\mathbb{T}}\int_{\mathbb{T}}\overline{g(z)}\,\Delta_{\vartheta}(z,\, w)\, f(w)\, ds(w)\, ds(z)\\
 & =\int_{\mathbb{T}}\int_{\mathbb{T}}\overline{g(z)}\,\left\langle P_{\vartheta}k_{w},\, P_{\vartheta}k_{z}\right\rangle \, f(w)\, ds(w)\, ds(z)\\
 & =\int_{\mathbb{T}}\int_{\mathbb{T}}\left\langle f(w)\, P_{\vartheta}k_{w},\, g(z)\, P_{\vartheta}k_{z}\right\rangle \, ds(w)\, ds(z)\\
 & =\left\langle \int_{\mathbb{T}}f(w)\, P_{\vartheta}k_{w}\, ds(w),\,\int_{\mathbb{T}}g(z)\, P_{\vartheta}k_{z}\, ds(z)\right\rangle \\
 & =\left\langle \int_{\mathbb{T}}f(w)\, P_{\vartheta}k_{w}\, ds(w),\,\int_{\mathbb{T}}g(z)\, P_{\vartheta}k_{z}\, ds(z)\right\rangle _{\mathfrak{M}_{\vartheta}}\end{align*}
 so we have factorised $\Delta_{\vartheta}$ as $A_{\vartheta}^{*}A_{\vartheta}$,
where $A_{\vartheta}:H^{2}\to\mathfrak{M}_{\vartheta}$ is given by
\[
A_{\vartheta}f:=\int_{\mathbb{T}}f(w)\, P_{\vartheta}k_{w}\, ds(w)\,.\]

We also note now, for use later, that $A_{\vartheta}^{*}=I_{\mathfrak{M}_{\vartheta}}$,
the embedding map of $\mathfrak{M}_{\vartheta}$ into $H^{2}$ , as
\begin{align*}
\left\langle A_{\vartheta}f,\, g\right\rangle  & =\left\langle \int_{\mathbb{T}}f(w)\, P_{\vartheta}k_{w}\, ds(w),\, g\right\rangle _{\mathfrak{M}_{\vartheta}}\\
 & =\int_{\mathbb{T}}f(w)\,\left\langle P_{\vartheta}k_{w},\, g\right\rangle _{\mathfrak{M}_{\vartheta}}\, ds(w)\\
 & =\int_{\mathbb{T}}f(w)\,\left\langle P_{\vartheta}k_{w},\, I_{\mathfrak{M}_{\vartheta}}g\right\rangle \, ds(w)\\
 & =\int_{\mathbb{T}}f(w)\,\left\langle k_{w},\, I_{\mathfrak{M}_{\vartheta}}g\right\rangle \, ds(w)\\
 & =\int_{\mathbb{T}}f(w)\overline{\left(I_{\mathfrak{M}_{\vartheta}}g\right)(w)}ds\\
 & =\left\langle f,\, I_{\mathfrak{M}_{\vartheta}}g\right\rangle \,.\end{align*}

We choose any set of four points $F=\{z_{1},\, z_{2},\, z_{3},\, z_{4}\}\in\mathbb{D}$,
and consider the classical Nevanlinna-Pick problem, of finding a contractive
function $\varphi\in H^{\infty}$ such that $\varphi(z_{i})=\psi_{0}(z_{i})$
for all $i$. Since $\Delta_{0}(z,\, w)$ has rank at most three,
the $4\times4$ matrix \[
\left(\left[1-\psi_{0}(z_{i})\psi_{0}(z_{j})^{*}\right]k(z_{j},\, z_{i})\right)_{i,j=1}^{4}\]
 must be singular, so the problem has a unique solution, $\varphi=\psi_{0}$.

Now, if we assume that $C$ is a set of test functions for $H_{1}^{\infty}$,
then by Theorem 2.3 of \cite{DritschelInterpolation} there must be
a positive kernel $\Gamma:F\times F\to C(C)^{*}$ such that \[
1-\psi_{0}(z_{i})\psi_{0}(z_{j})^{*}=\Gamma(z_{i},\, z_{j})\left(1-E(z_{i})E(z_{j})^{*}\right)\,.\]
 Indeed, by Theorem 2.2 of \cite{DritschelInterpolation}, this kernel
must extend to the whole of $\mathbb{D}\times\mathbb{D}$. We can
rewrite this, in our case, by saying that there exists a measure $\mu$
on $C$, and functions $h_{l}(z,\,\cdot)\in L^{2}(\mu)$, for $l=1,\,\ldots,4$,
such that \begin{equation}
1-\psi_{0}(z)\psi_{0}(w)^{*}=\int_{C}\sum_{l=1}^{4}h_{l}(z,\,\psi)h_{l}(w,\,\psi)^{*}\left(1-\psi(z)\psi(w)^{*}\right)d\mu(\psi)\,.\label{eq:real}\end{equation}

Multiplying this equation by $k(z,\, w)$ gives \[
\Delta_{0}(z,\, w)=\int_{C}\sum_{l=1}^{4}h_{l}(z,\,\psi)\Delta_{\psi}(z,\, w)h_{l}(w,\,\psi)^{*}d\mu(\psi)\,.\]

Since $\Delta_{\psi}$ is a positive kernel and a positive operator,
when seen as an operator on $H^{2}$, as above, we can say that for
all $l$, \[
\Delta_{0}(z,\, w)\geq\int_{C}h_{l}(z,\,\psi)\Delta_{\psi}(z,\, w)h_{l}(w,\,\psi)^{*}d\mu(\psi)\,.\]
 We now know, by Lemma \vref{lem:MeasureLemma}, that for any $\delta>0$,
there is a set $C^{\prime}$, and a constant $c_{\delta}>0$ such
that $\mu(C-C^{\prime})<\delta$, and \[
\Delta_{0}(z,\, w)\geq c_{\delta}h_{l}(z,\,\psi)\Delta_{\psi}(z,\, w)h_{l}(w,\,\psi)^{*}\]
 for all $\psi\in C^{\prime}$.

If we use our factorisation of $\Delta_{\vartheta}$ from above, we
see that \[
A_{0}^{*}A_{0}\geq c_{\delta}h_{l}(z,\,\psi)A_{\psi}^{*}A_{\psi}h_{l}(w,\,\psi)^{*}\]
 and so we can apply Douglas' Lemma (\cite{DouglasLemma}), to see
that the range of $h_{l}(\cdot,\,\psi)A_{\psi}^{*}$ is contained
in the range of $A_{0}^{*}$, therefore, there exist constants $c_{1},\,\ldots,\, c_{9}$
so that \begin{align}
h_{l}(\cdot,\,\psi)k_{0} & =c_{1}k_{0}+c_{2}k_{0^{\prime}}+c_{3}k_{a_{0}}\label{eq:sim-1}\\
h_{l}(\cdot,\,\psi)k_{0^{\prime}} & =c_{4}k_{0}+c_{5}k_{0^{\prime}}+c_{6}k_{a_{0}}\label{eq:sim-2}\\
h_{l}(\cdot,\,\psi)k_{a_{\psi}} & =c_{7}k_{0}+c_{8}k_{0^{\prime}}+c_{9}k_{a_{0}}\,.\label{eq:sim-3}\end{align}

By letting $\delta$ go to zero, we see that these equations must
hold for $\mu$-almost-all $\psi\in\Psi$.

Equation \eqref{eq:sim-1} tells us that $h_{l}(\cdot,\,\psi)=c_{1}k_{0}+c_{2}k_{0^{\prime}}+c_{3}k_{a_{0}}$,
as $k_{0}$ is constant We can also see that $h_{l}(\cdot,\,\psi)$
must extend meromorphically to the Riemann sphere, as these kernels
do so. Equation \eqref{eq:sim-2} tells us that $c_{2}=0$, as otherwise
the left hand side of the equation has a triple pole at $\infty$,
but the right hand side has at most only a double pole.

We consider equation \eqref{eq:sim-3} in three cases. Firstly, if
$\psi$ has only two zeroes, there is no equation \eqref{eq:sim-3},
so $h_{l}(\cdot,\,\psi)=c_{1}+c_{3}k_{a_{0}}$. 

If $a_{\psi}\neq0$ , then\begin{gather*}
h_{l}(y,\,\psi)k_{a_{\psi}}(y)=c_{7}k_{0}(y)+c_{8}k_{0^{\prime}}(y)+c_{9}k_{a_{0}}(y)\\
\left[c_{1}+c_{3}\frac{1}{1-\overline{a_{0}}y}\right]\frac{1}{1-\overline{a_{\psi}}y}=c_{7}+c_{8}y+c_{9}\frac{1}{1-\overline{a_{0}}y}\\
c_{1}+c_{3}\frac{1}{1-\overline{a_{0}}y}=c_{7}-c_{7}\overline{a_{\psi}}y+c_{8}y-c_{8}\overline{a_{\psi}}y^{2}+\frac{c_{9}-c_{9}\overline{a_{\psi}}y}{1-\overline{a_{0}}y}\\
c_{1}-c_{1}\overline{a_{0}}y+c_{3}=\left\{ \begin{matrix}c_{7}-c_{7}\overline{a_{\psi}}y-c_{7}\overline{a_{0}}y+c_{7}\overline{a_{\psi}}\overline{a_{0}}y^{2}\\
+c_{8}y-c_{8}\overline{a_{\psi}}y^{2}-c_{8}\overline{a_{0}}y^{2}+c_{8}\overline{a_{\psi}}\overline{a_{0}}y^{3}\\
+c_{9}-c_{9}\overline{a_{\psi}}y\end{matrix}\right.\,.\end{gather*}
Looking at the $y^{3}$ coefficient tells us that $c_{8}=0$, looking
at the $y^{2}$ coefficient then tells us that $c_{7}=0$, and so
comparing the constant and $y$ coefficients, we see that \begin{gather*}
\left\{ \begin{array}{rl}
c_{1}+c_{3} & =c_{9}\\
c_{1}\overline{a_{0}} & =c_{9}\overline{a_{\psi}}\end{array}\right.\\
\left\{ \begin{array}{rl}
c_{1} & =c_{9}\frac{\overline{a_{\psi}}}{\overline{a_{0}}}\\
c_{3} & =c_{9}\left(1-\frac{\overline{a_{\psi}}}{\overline{a_{0}}}\right)\end{array}\right.\end{gather*}
 and so \[
h_{l}(y,\,\psi)=c_{9}\left(\frac{\overline{a_{\psi}}}{\overline{a_{0}}}+\left(1-\frac{\overline{a_{\psi}}}{\overline{a_{0}}}\right)\frac{1}{1-\overline{a_{0}y}}\right)=c\frac{1-\overline{a_{\psi}}y}{1-\overline{a_{0}}y}\]

Alternately, if $a_{\psi}=0$, then equation \eqref{eq:sim-3} becomes
\begin{gather*}
h_{l}(y,\,\psi)y^{2}=c_{7}+c_{8}y+c_{9}\frac{1}{1-\overline{a_{0}}y}\\
c_{1}y^{2}+c_{3}\frac{y^{2}}{1-\overline{a_{0}}y}=c_{7}+c_{8}y+c_{9}\frac{1}{1-\overline{a_{0}}y}\\
c_{1}y^{2}-c_{1}\overline{a_{0}}y^{3}+c_{3}y^{2}=c_{7}+c_{7}\overline{a_{0}}y+c_{8}y+c_{8}\overline{a_{0}}y^{2}+c_{9}\end{gather*}
 Looking at the $y^{3}$ coefficient tells us that $c_{1}\overline{a_{0}}=0$,
so $c_{1}=0$. The $y^{2}$ terms then tell us that $c_{3}=c_{8}\overline{a_{0}}$.
We then see that \[
h_{l}(y,\,\psi)=c_{3}\frac{1}{1-\overline{a_{0}}y}=c\frac{1-\overline{a_{\psi}}y}{1-\overline{a_{0}}y}\]
 as before.

Combining these consequences of equations \eqref{eq:sim-1}-\eqref{eq:sim-3}
with \eqref{eq:real} gives a more explicit realisation than the one
in \eqref{eq:real}; that is, \begin{multline*}
1-\psi_{0}(z)\psi_{0}(w)^{*}=\\
\sum_{l=1}^{4}\left(\alpha_{l}+\frac{\beta_{l}}{1-\overline{a_{0}}z}\right)\left(\overline{\alpha_{l}}+\frac{\overline{\beta_{l}}}{1-a_{0}\overline{w}}\right)\left(1-\psi_{\infty}(z)\psi_{\infty}(w)^{*}\right)\\
+\int_{C\backslash\{\infty\}}c(\psi)\left(\frac{1-\overline{a_{\psi}}z}{1-\overline{a_{0}}z}\right)\left(\frac{1-a_{\psi}\overline{w}}{1-a_{0}\overline{w}}\right)\left(1-\psi(z)\psi(w)^{*}\right)d\mu(\psi)\end{multline*}
 for some positive $c\in L^{1}(\mu)$ and some $\alpha_{1},\,\beta_{2},\,\ldots,\, a_{4},\,\beta_{4}\in\mathbb{C}$.
We know that the $\psi$s are Blaschke products, and we know their
roots, so we can write this even more explicitly as \begin{multline*}
1-z^{2}\overline{w}^{2}\frac{z-a_{0}}{1-\overline{a_{0}}z}\frac{\overline{w}-\overline{a_{0}}}{1-a_{0}\overline{w}}=\\
\sum_{l=1}^{4}\left(\alpha_{l}+\frac{\beta_{l}}{1-\overline{a_{0}}z}\right)\left(\overline{\alpha_{l}}+\frac{\overline{\beta_{l}}}{1-a_{0}\overline{w}}\right)\left(1-z^{2}\overline{w}^{2}\right)\\
+\int_{C\backslash\{\infty\}}c(\psi)\frac{1-\overline{a_{\psi}}z}{1-\overline{a_{0}}z}\frac{1-a_{\psi}\overline{w}}{1-a_{0}\overline{w}}\left(1-z^{2}\overline{w}^{2}\frac{z-a_{\psi}}{1-\overline{a_{\psi}}z}\frac{\overline{w}-\overline{a_{\psi}}}{1-a_{\psi}\overline{w}}\right)d\mu(\psi)\end{multline*}
 If we multiply both sides by $\left(1-\overline{a_{0}}z\right)\left(1-a_{0}\overline{w}\right)$
we get \begin{multline*}
\left(1-\overline{a_{0}}z\right)\left(1-a_{0}\overline{w}\right)-z^{2}\overline{w}^{2}\left(z-a_{0}\right)\left(\overline{w}-\overline{a_{0}}\right)=\\
\sum_{l=1}^{4}\left(\alpha_{l}\left(1-\overline{a_{0}}z\right)+\beta_{l}\right)\left(\overline{\alpha_{l}}\left(1-a_{0}\overline{w}\right)+\overline{\beta_{l}}\right)\left(1-z^{2}\overline{w}^{2}\right)\\
+\int_{C\backslash\{\infty\}}c(\psi)\left(\left(1-\overline{a_{\psi}}z\right)\left(1-a_{\psi}\overline{w}\right)-z^{2}\overline{w}^{2}\left(z-a_{\psi}\right)\left(\overline{w}-\overline{a_{\psi}}\right)\right)d\mu(\psi)\end{multline*}
 which we can expand to get \begin{multline*}
1-\overline{a_{0}}z-a_{0}\overline{w}+\left|a_{0}\right|^{2}z\overline{w}-z^{3}\overline{w}^{3}+a_{0}z^{2}\overline{w}^{3}+\overline{a_{0}}z^{3}\overline{w}^{2}-\left|a_{0}\right|^{2}z^{2}\overline{w}^{2}=\\
\sum_{l=1}^{4}\left\{ \begin{array}{c}
\left|\alpha_{l}+\beta_{l}\right|^{2}-\left|\alpha_{l}+\beta_{l}\right|^{2}z^{2}\overline{w}^{2}\\
-(\alpha_{l}+\beta_{l})\overline{\alpha_{l}}a_{0}\overline{w}+(\alpha_{l}+\beta_{l})\overline{\alpha_{l}}a_{0}z^{2}\overline{w}^{3}\\
-(\overline{\alpha_{l}}+\overline{\beta_{l}})\alpha_{l}\overline{a_{0}}z+(\overline{\alpha_{l}}+\overline{\beta_{l}})\alpha_{l}\overline{a_{0}}z^{3}\overline{w}^{2}\\
+\left|\alpha_{l}\right|^{2}\left|a_{0}\right|^{2}z\overline{w}-\left|\alpha_{l}\right|^{2}\left|a_{0}\right|^{2}z^{3}\overline{w}^{3}\end{array}\right.\\
+\int_{C\backslash\{\infty\}}c(\psi)\left(\begin{array}{c}
1-\overline{a_{\psi}}z-a_{\psi}\overline{w}+\left|a_{\psi}\right|^{2}z\overline{w}\\
-z^{3}\overline{w}^{3}+\overline{a_{\psi}}z^{3}\overline{w}^{2}+a_{\psi}z^{2}\overline{w}^{3}-\left|a_{\psi}\right|^{2}z^{2}\overline{w}^{2}\end{array}\right)d\mu(\psi)\end{multline*}

To get our contradiction, we look at the $\overline{w}$, $z^{3}\overline{w}^{3}$
and $z^{2}\overline{w}^{2}$ coefficients of this equation, i.e, \begin{align}
a_{0} & =\sum_{l=1}^{4}(\alpha_{l}+\beta_{l})\overline{\alpha_{l}}a_{0}+\int_{C\backslash\{\infty\}}c(\psi)a_{\psi}d\mu(\psi)\label{eq:con-1}\\
1 & =\sum_{l=1}^{4}\left|\alpha_{l}\right|^{2}\left|a_{0}\right|^{2}+\int_{C\backslash\{\infty\}}c(\psi)d\mu(\psi)\label{eq:con-2}\\
\left|a_{0}\right|^{2} & =\sum_{l=1}^{4}\left|\alpha_{l}+\beta_{l}\right|^{2}+\int_{C\backslash\{\infty\}}c(\psi)\left|a_{\psi}\right|^{2}\,.\label{eq:con-3}\end{align}
 We can easily see that \eqref{eq:con-1} implies \[
\left|a_{0}\right|^{2}=\left|\sum_{l=1}^{4}(\alpha_{l}+\beta_{l})\overline{\alpha_{l}}a_{0}+\int_{C\backslash\{\infty\}}c(\psi)a_{\psi}d\mu(\psi)\right|^{2}\]
 and if we define a Hilbert space $H=\mathbb{C}^{4}\oplus L^{2}(\mu)$,
then we can rewrite this as\begin{equation}
\left|a_{0}\right|^{2}=\left|\left\langle \left(\begin{matrix}(a_{0}\overline{\alpha_{l}})_{l}\\
c(\psi)^{\nicefrac{1}{2}}\end{matrix}\right),\,\left(\begin{matrix}(\overline{\alpha_{l}}+\overline{\beta_{l}})_{l}\\
c(\psi)^{\nicefrac{1}{2}}\overline{a_{\psi}}\end{matrix}\right)\right\rangle \right|^{2}\label{eq:vectors}\end{equation}
 and apply the Cauchy-Schwarz inequality, so \begin{multline*}
\left|a_{0}\right|^{2}\leq\\
\underbrace{\left(\sum_{l=1}^{4}\left|\alpha_{l}\right|^{2}\left|a_{0}\right|^{2}+\int_{C\backslash\{\infty\}}c(\psi)d\mu(\psi)\right)}_{=1\text{ by \eqref{eq:con-2}}}\underbrace{\left(\sum_{l=1}^{4}\left|\alpha_{l}+\beta_{l}\right|^{2}+\int_{C\backslash\{\infty\}}c(\psi)\left|a_{\psi}\right|^{2}\right)}_{=\left|a_{0}\right|^{2}\text{ by \eqref{eq:con-3}}}\\
=\left|a_{0}\right|^{2}\,.\end{multline*}

Further, the two vectors in \eqref{eq:vectors} are linearly independent,
as $\overline{a_{\psi}}$ is {[}the complex conjugate of] the third
root of $\psi$, which is different for each $\psi$, so the inequality
is strict, which is a contradiction%
\footnote{This is a slight oversimplification. If $c(\psi)$ is non-zero at
exactly one point $\psi$ (and that $\psi$ is singular with respect
to $\mu$), then this argument doesn't hold. However, a simple calculation
by equating coefficients (which is omitted) shows that if the vectors
are linearly dependent, so $c(\psi)$ is non-zero at exactly one point,
then that point must be $\psi_{0}\notin C$, which is also a contradiction.%
}.
\end{proof}
\begin{rem}
Clearly, this proof is not very good. It would seem more natural to
use the fact that the Herglotz-Agler representation from Theorem \ref{thm:SuperHerglotz}
was parameterised by extreme measures, although there is no obvious
way to do this.
\end{rem}
The fact that the minimum set of test functions is parameterised by
the sphere is interesting, as the set of kernels given in \cite{H1Infinity}
is also parameterised by the sphere, and conjectured to be minimal
(that paper contains some partial results, towards this aim).

Similarly, Abrahamse gave (in \cite{AbrahmsePick}) a set of kernels
corresponding to interpolation on a multiply connected ($n$-holed)
domain, parameterised by the $n$-torus, and conjectured that this
set of kernels was minimal (there are some partial results in this
direction in \cite{BallClancey}); in \cite{DritschelInterpolation}
and \cite{Hyperelliptic}, the authors give sets of test functions
for $n$-holed domains, which are also parameterised by the $n$-torus,
and minimal%
\footnote{The main aim of \cite{Hyperelliptic} was not to investigate test
functions, although Note 2.10 of \cite{Hyperelliptic} gives the set
of test functions, and the results of Section 5.2 of \cite{Hyperelliptic}
are broadly analogous to the results used in the proof of Theorem
\ref{thm:Minimal} in this paper, and in Proposition 5.3 of \cite{DritschelInterpolation}.%
}.

It's not clear whether there is some sort of duality between minimal
sets of test functions and minimal sets of kernels. A possible counterexample
to such a duality is the bidisc; it's well known (see for example
\cite{AglerPick}) that only two test functions are needed for the
bidisc, whereas in \cite{CStarEnvelopesMcCulloughPaulsen}, the authors
conjecture that infinitely many kernels are required.

\subsection{Generalisation}

It's worth briefly discussing how these ideas could be generalised
to other spaces. In \cite{RaghupathiCBHInfty}, the author looks at
spaces of the form%
\footnote{Here, $B$ is a Blaschke product%
} $\mathbb{C}+BH^{\infty}$. These spaces are a natural generalisation
of the space $H_{1}^{\infty}$, and the author provides a generalisation
of the Nevanlinna-Pick theorem from \cite{H1Infinity}%
\footnote{The author only generalises the first form here; a generalisation
of the second form is given in \cite{MatrixH1Infinity}.%
}.

We used the Herglotz representation trick to turn a linear equation
($f^{\prime}(0)=0$) into a constraint on probability measures (that
they have zero mean), found the extreme points of the set of constrained
probability measures, and then used those extreme probability measures
to generate our test functions.

Suppose we want to apply these techniques to $\mathbb{C}+BH^{\infty}$.
It's fairly clear that we can come up with a set of linear equations
that all functions in $\mathbb{C}+BH^{\infty}$ must satisfy (functions
must be constant at zeroes of $B$, and have a prescribed number of
zero derivatives at repeated zeroes of $B$). It should also be fairly
easy to turn this set of linear equations into a set of constraints
on probability measures.

The difficulty comes when we calculate extreme points. Theorem \ref{thm:ExtremeDirections}
should work well enough, so if we have $n$ independent equations,
we can say that extreme measures are supported on at most $2n+1$
points. However, the reasoning that followed Theorem \ref{thm:ExtremeDirections}
(which showed precisely which such measures would give functions in
$H_{1}^{\infty}$) relied on a geometric interpretation of the probability
constraint, which doesn't obviously generalise to other types of constraint.

If we can calculate the extreme measures, these should generate test
functions in precisely the same way as above. However, the proof that
the test functions we've found are minimal is heavily dependent --
perhaps overly dependent -- on explicit calculations using the test
functions.

\bibliographystyle{amsalpha}
\bibliography{/home/james/Documents/Maths/library/library}

\providecommand{\bysame}{\leavevmode\hbox to3em{\hrulefill}\thinspace}
\providecommand{\MR}{\relax\ifhmode\unskip\space\fi MR }
\providecommand{\MRhref}[2]{%
  \href{http://www.ams.org/mathscinet-getitem?mr=#1}{#2}
}
\providecommand{\href}[2]{#2}
\begin{thebibliography}{DPRS07}

\bibitem[Abr79]{AbrahmsePick}
Marine~B. Abrahamse, \emph{The {P}ick interpolation theorem for finitely
  connected domains}, Michigan Math. J. \textbf{26} (1979), no.~2, 195--203.

\bibitem[AHR07]{AglerComputation}
Jim Agler, John Harland, and Benjamin Raphael, \emph{Classical function theory,
  operator dilation theory, and machine computations on multiply connected
  domains}, Memoirs of the American Mathematical Society, AMS Bookstore, 2007.

\bibitem[AM02]{AglerPick}
Jim Agler and John~E. McCarthy, \emph{Pick interpolation and {H}ilbert function
  spaces}, Graduate Studies in Mathematics, AMS, 2002.

\bibitem[BBT08]{MatrixH1Infinity}
Joseph~A. Ball, Vladimir Bolotnikov, and Sanne {Ter Horst}, \emph{A constrained
  {N}evanlinna-{P}ick interpolation problem for matrix-valued functions},
  ArXiv: 0809:2345, September 2008.

\bibitem[BC96]{BallClancey}
Joseph~A. Ball and Kevin~F. Clancey, \emph{Reproducing kernels for {H}ardy
  spaces on multiply connected domains}, Integral Equations and Operator Theory
  \textbf{25} (1996), 35--57.

\bibitem[DM07]{DritschelInterpolation}
Michael~A. Dritschel and Scott McCullough, \emph{Test functions, kernels,
  realizations and interpolation}, Operator Theory, Structured Matrices and
  Dilations: Tiberiu Constantinescu Memorial Volume, pp.~153--179, Theta
  Foundation, Bucharest, 2007.

\bibitem[Dou66]{DouglasLemma}
Ronald~G. Douglas, \emph{On majorization, factorization, and range inclusion of
  operators on {H}ilbert space}, Proceedings of the American Mathematical
  Society \textbf{17} (1966), no.~2, 413--415.

\bibitem[DPRS07]{H1Infinity}
Kenneth~R. Davidson, Vern Paulsen, Mrinal Raghupathi, and Dinesh Singh, \emph{A
  constrained {N}evanlinna-{P}ick interpolation problem}, ArXiv:0711.2032, Nov
  2007.

\bibitem[MP02]{CStarEnvelopesMcCulloughPaulsen}
Scott McCullough and Vern Paulsen, \emph{{$C^*$}-envelopes and interpolation
  theory}, Indiana University Mathematics Journal \textbf{51} (2002), no.~2,
  479--505.

\bibitem[Pic08]{Hyperelliptic}
James Pickering, \emph{Counterexamples to rational dilation on symmetric
  multiply connected domains}, Complex Analysis and Operator Theory (2008),
  Online edition, yet to appear in print.

\bibitem[Rag08]{RaghupathiCBHInfty}
Mrinal Raghupathi, \emph{Nevanlinna-{P}ick interpolation for
  {$\mathbb{C}+BH^\infty$}}, ArXiv:0803.1278, 2008.

\end{thebibliography}

\end{document}